\theoremstyle{plain}
\newtheorem{theorem}{Theorem}[section]
\newtheorem{lemma}[theorem]{Lemma}
\newtheorem{conjecture}[theorem]{Conjecture}
\newtheorem{definition}[theorem]{Definition}
\newtheorem{remark}[theorem]{Remark}
\newtheorem{claim}{Claim}
\newcommand{\fig}[5]{
\begin{figure}[H]
\begin{center}
\resizebox{#1}{#2}{\includegraphics{#3}}
\end{center}
\caption{#4}
\label{#5}
\end{figure}
}
\newcommand{\C}{\mathbb{C}}
\title{
$(g,f)$-Chromatic spanning trees and forests
\footnotetext{
MSC2010:
05C05(Trees.),
05C15(Coloring of graphs and hypergraphs.),
05C70(Factorization, matching, partitioning, covering and packing).
}
\footnotetext{
This work was supported by JSPS KAKENHI Grant Number 16K05254.
}
}
\author{
Kazuhiro Suzuki%
\footnote{
Department of Information Science,
Kochi University,
Japan.
kazuhiro@tutetuti.jp.
}
}
\date{\empty}
\begin{document}
\maketitle

\begin{abstract}
A heterochromatic (or rainbow) graph
is an edge-colored graph
whose edges have distinct colors,
that is,
where each color appears at most once.
In this paper,
I propose a $(g,f)$-chromatic graph
as an edge-colored graph
where each color $c$ appears
at least $g(c)$ times and at most $f(c)$ times.
I also present a necessary and sufficient condition
for edge-colored graphs (not necessary to be proper)
to have a $(g,f)$-chromatic spanning tree.
Using this criterion,
I show that
an edge-colored complete graph $G$ has
a spanning tree
with a color probability distribution
``similar'' to that of $G$.
Moreover, I conjecture that
an edge-colored complete graph $G$
of order $2n$ $(n \ge 3)$
can be partitioned into
$n$ edge-disjoint spanning trees
such that
each has a color probability distribution
``similar'' to that of $G$.
\\[6pt]
{\bf Keywords:}
$(g,f)$-chromatic,
heterochromatic,
rainbow,
spanning tree,
color probability distribution.
\end{abstract}

\section{Introduction}
We consider finite undirected graphs
without loops or multiple edges.
For a graph $G$, we denote by $V(G)$ and $E(G)$
its vertex and edge sets, respectively.
An \textit{edge-coloring} of a graph $G$
is a mapping $color:E(G) \rightarrow \C$,
where $\C$ is a set of colors.
Then, the triple $(G, \C, color)$
is called an \textit{edge-colored graph}.
We often abbreviate an edge-colored graph $(G, \C, color)$ as $G$.
Note that
an edge colored graph is not necessary to be proper,
where
distinct red edges may have a common end vertex.

\subsection{%
Heterochromatic (or rainbow) spanning trees}
An edge-colored graph $G$ is said to be \textit{heterochromatic}%
\footnote{%
A heterochromatic graph is also said to be 
\textit{rainbow}, \textit{multicolored}, \textit{totally multicolored},
\textit{polychromatic}, or \textit{colorful},
and so on.
}
if no two edges of $G$ have the same color,
that is,
$color(e_i) \ne color(e_j)$
for any two distinct edges $e_i$ and $e_j$ of $G$.
As far as I know,
there are three topics about heterochromatic graphs:
the Anti-Ramsey problem
introduced by Erd\H{o}s et al.
\cite{Erdos1975Anti-RamseyTheorems},
rainbow connection problems
introduced by Chartrand et al.
\cite{Chartrand2008RAINBOWGRAPHS},
and heterochromatic subgraph problems,
(see the surveys
\cite{Fujita2010RainbowSurvey}
\cite{Li2013RainbowSurvey}
\cite{Kano2008MonochromaticSurvey}
).
This paper focuses
on heterochromatic subgraph problems.

We denote by $\omega(G)$ the number of components of a graph $G$.
Given an edge-colored graph $G$ and a color set $R$,
we define $E_{R}(G) = \{ e \in E(G) ~|~ color(e) \in R \}$.
For simplicity,
we denote the graph
$(V(G), E(G) \setminus E_R(G))$ by $G-E_R(G)$,
and also denote
$E_{\{c\}}(G)$ by $E_c(G)$ for a color $c$.

Akbari \& Alipour
\cite{Akbari2007MulticoloredGraphs} 
and Suzuki
\cite{Suzuki2006AGraph}
independently presented
a necessary and sufficient condition
for edge-colored graphs
to have a heterochromatic spanning tree.

\begin{theorem}[%
Akbari and Alipour
\cite{Akbari2007MulticoloredGraphs},
Suzuki
\cite{Suzuki2006AGraph}]
An edge-colored graph $G$
has a heterochromatic spanning tree
if and only if
\begin{equation*}
\omega(G-E_R(G)) \le |R|+1
\text{~~~~~ for any } R \subseteq \C.
\end{equation*}
\label{thm_Suzuki2006AGraph_1}
\end{theorem}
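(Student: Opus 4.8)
The plan is to prove the two implications of the equivalence separately. The forward direction is routine; the converse is the substantial one, and I would reduce it to Edmonds' matroid intersection theorem.

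\emph{Necessity.} Suppose $G$ has a heterochromatic spanning tree $T$ and fix $R \subseteq \C$. Since $T$ is heterochromatic it contains at most one edge of each color in $R$, so $|E_R(T)| \le |R|$. Deleting these edges from the tree $T$ leaves a spanning forest of $G$ with at most $|R|+1$ components, and this forest is a spanning subgraph of $G - E_R(G)$; adding the remaining edges of $G - E_R(G)$ can only merge components, so $\omega(G - E_R(G)) \le |R|+1$.

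\emph{Sufficiency.} Applying the hypothesis with $R = \emptyset$ gives $\omega(G) \le 1$, so $G$ is connected and we may assume $|V(G)| \ge 2$. I would then phrase the problem as a matroid intersection: let $E = E(G)$, let $M_1$ be the cycle matroid of $G$, with rank function $r_1(X) = |V(G)| - \omega((V(G),X))$, and let $M_2$ be the partition matroid on $E$ in which $X$ is independent precisely when $X$ uses each color at most once, with rank function $r_2(X)$ equal to the number of distinct colors appearing on $X$. A common independent set of $M_1$ and $M_2$ of size $|V(G)|-1$ is exactly a heterochromatic spanning tree, since a heterochromatic forest with $|V(G)|-1$ edges on $|V(G)|$ vertices is connected, hence a spanning tree. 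By the matroid intersection theorem, the maximum size of a common independent set equals $\min_{X \subseteq E}\bigl(r_1(X) + r_2(E \setminus X)\bigr)$, so it suffices to verify $r_1(X) + r_2(E \setminus X) \ge |V(G)|-1$ for every $X \subseteq E$. Given such an $X$, let $R$ be the set of colors appearing on $E \setminus X$, so $r_2(E \setminus X) = |R|$. Every edge whose color lies outside $R$ must belong to $X$, hence $E \setminus E_R(G) \subseteq X$ and therefore $r_1(X) \ge r_1(E \setminus E_R(G)) = |V(G)| - \omega(G - E_R(G)) \ge |V(G)| - (|R|+1)$ by the hypothesis applied to this $R$. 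Adding the two estimates gives $r_1(X) + r_2(E \setminus X) \ge |V(G)|-1$; since every common independent set has at most $r_1(E) = |V(G)|-1$ elements, the maximum is exactly $|V(G)|-1$, which yields the desired tree.

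\emph{Main obstacle.} The real content lies entirely in the converse, and the crux is that the ``obvious'' candidate violating set — the set of colors used on some maximal heterochromatic forest — need not satisfy $\omega(G - E_R(G)) \ge |R|+2$, so one genuinely needs the correct certificate. The matroid intersection theorem supplies it; for a self-contained argument one would instead take a maximum heterochromatic forest $F$, assume $|E(F)| < |V(G)|-1$, and run the matroid-intersection augmenting-path search in the associated exchange digraph — the absence of an augmenting path is precisely what produces a color set $R$ with $\omega(G - E_R(G)) \ge |R|+2$, contradicting the hypothesis. Reproducing that augmenting-path analysis (or, as an alternative, an induction on $|\C|$ that branches on whether a fixed color is used in the tree, contracting one edge of that color or deleting the whole color class) is the step requiring genuine care.
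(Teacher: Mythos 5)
Your argument is correct in both directions, but note that this paper never proves Theorem \ref{thm_Suzuki2006AGraph_1} itself: it is quoted from Akbari--Alipour and Suzuki, and the proofs there (like the arguments this paper does give for its generalizations, e.g.\ the exchange argument proving Lemma \ref{lem_180923_2}) are elementary augmentation/exchange arguments: take an extremal heterochromatic forest, add an edge of a missing color, delete a suitable edge of the unique cycle created, and extract from the failure of this process a color set $R$ with $\omega(G-E_R(G))\ge |R|+2$. Your route is genuinely different: you encode a heterochromatic spanning tree as a common independent set of size $|V(G)|-1$ of the cycle matroid and the partition matroid induced by the coloring, and invoke Edmonds' matroid intersection theorem; the key estimate $r_1(X)+r_2(E\setminus X)\ge |V(G)|-1$, obtained by passing to the color set $R$ of $E\setminus X$ and using $E\setminus E_R(G)\subseteq X$ together with the hypothesis, is exactly right, and your necessity argument is the standard counting. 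What matroid intersection buys is brevity and an explicit min--max certificate; what the elementary exchange style buys is self-containedness and, more importantly for this paper, a template that scales to the capacitated $f$-chromatic and lower-bounded $(g,f)$-chromatic settings of Theorems \ref{thm_Suzuki2013AForests_1} and \ref{thm_180815_1}, where a clean two-matroid formulation is less immediate (the lower bounds $g$ are handled here by a separate $(g,g)$-forest criterion, Lemma \ref{lem_180923_1}, rather than by a single intersection). Your closing observation that one could replace the black-box theorem by the augmenting-path analysis in the exchange digraph is, in essence, the direct exchange proof that the cited papers carry out.
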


Suzuki
\cite{Suzuki2006AGraph}
proved the following theorem by using Theorem
\ref{thm_Suzuki2006AGraph_1}.

\begin{theorem}[Suzuki \cite{Suzuki2006AGraph}]
An edge-colored complete graph $G$ of order n
has a heterochromatic spanning tree
if $|E_c(G)| \le n/2$ for any color $c \in \C$.
\label{thm_Suzuki2006AGraph_2}
\end{theorem}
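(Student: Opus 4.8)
The plan is to verify the combinatorial criterion of Theorem~\ref{thm_Suzuki2006AGraph_1}, i.e.\ to show that $\omega(G-E_R(G)) \le |R|+1$ for every $R \subseteq \C$. So I would fix an arbitrary color set $R \subseteq \C$, write $\omega = \omega(G-E_R(G))$, and let $V_1,\dots,V_\omega$ be the vertex sets of the components of $G-E_R(G)$, with $n_i = |V_i|$; thus $\sum_i n_i = n$ and each $n_i \ge 1$. If $\omega = 1$ there is nothing to prove, so I may assume $\omega \ge 2$.

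The first step is to observe that, because $G$ is complete, every pair of vertices lying in two distinct parts $V_i, V_j$ is joined by an edge of $G$, and that edge cannot survive in $G-E_R(G)$ (otherwise $V_i$ and $V_j$ would lie in the same component). Hence all such ``cross'' edges belong to $E_R(G)$, which gives
\[
  \binom{n}{2} - \sum_{i=1}^{\omega}\binom{n_i}{2} \;\le\; |E_R(G)| \;=\; \sum_{c\in R}|E_c(G)| \;\le\; |R|\cdot\frac{n}{2},
\]
where the last inequality uses the hypothesis $|E_c(G)|\le n/2$.

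Next, since $x\mapsto\binom{x}{2}$ is convex and the $n_i$ are positive integers summing to $n$, the quantity $\sum_i\binom{n_i}{2}$ is maximized by the most unbalanced partition $n_1=n-\omega+1,\ n_2=\dots=n_\omega=1$, so $\sum_i\binom{n_i}{2}\le\binom{n-\omega+1}{2}$. Substituting this into the displayed inequality and simplifying turns it into $(\omega-1)(2n-\omega)\le |R|\,n$. Now suppose, for contradiction, that $\omega\ge |R|+2$. Since the $\omega$ nonempty parts partition an $n$-element set, $\omega\le n$, hence $2n-\omega\ge n$, and therefore $(\omega-1)(2n-\omega)\ge(|R|+1)\,n > |R|\,n$, a contradiction. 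Thus $\omega(G-E_R(G))\le |R|+1$ for every $R\subseteq\C$, and Theorem~\ref{thm_Suzuki2006AGraph_1} yields a heterochromatic spanning tree of $G$.

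As for the difficulty: the argument is short once Theorem~\ref{thm_Suzuki2006AGraph_1} is available, and the only genuine point is recognizing that the extremal (one-large-component) configuration is the one that must be ruled out, and then checking that the resulting arithmetic inequality is incompatible with $\omega\ge|R|+2$. The identification of cross edges with $E_R(G)$ and the convexity bound are routine. It is worth double-checking the trivial edge cases $R=\emptyset$ and $\omega=1$, and noting that the proof does not secretly require $n$ to be even, since $|E_c(G)|\le n/2$ already supplies the bound $|E_R(G)|\le |R|\,n/2$ used above.
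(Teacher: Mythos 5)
Your proof is correct: every step checks out, including the identity $\binom{n}{2}-\binom{n-\omega+1}{2}=\tfrac{(\omega-1)(2n-\omega)}{2}$ and the final contradiction from $\omega\ge|R|+2$ together with $\omega\le n$. The route differs only modestly from the paper's. The paper does not re-prove this statement itself; it obtains it as the special case $m=1$, $g\equiv 0$, $f\equiv 1$ of the density criterion (Theorem~\ref{thm_180826_1}), whose proof runs by contradiction: a violating color set $R$ is produced via Theorem~\ref{thm_180815_1}, and then Lemma~\ref{lem_180923_3}, i.e.\ $|E(H)|\le\binom{|V(H)|-\omega(H)+1}{2}$, forces $|E(G)|\le\binom{n-m}{2}$, contradicting the edge-density hypothesis (for a complete graph, $|E(G)|=\binom{n}{2}>\binom{n-1}{2}$). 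You instead verify the criterion of Theorem~\ref{thm_Suzuki2006AGraph_1} directly for every $R$, bounding the number of cross edges between components of $G-E_R(G)$ from below by convexity and from above by $|R|\,n/2$. These are two packagings of the same count: your convexity step is exactly Lemma~\ref{lem_180923_3} applied to $G-E_R(G)$, read complementarily, since $|E_R(G)|=\binom{n}{2}-|E(G-E_R(G))|$ for complete $G$. What your version buys is a short, self-contained, direct argument specialized to complete graphs, with no contrapositive framing; what the paper's version buys is generality (arbitrary graphs with $|E(G)|>\binom{n-m}{2}$, arbitrary $f$ and $g$, and spanning forests with exactly $m$ components), from which this theorem and several others drop out as corollaries.
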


The complete graph $K_n$ has $(n-1)n/2$ edges,
thus the condition of Theorem
\ref{thm_Suzuki2006AGraph_2}
is equivalent to that
\begin{equation*}
\frac{|E_c(G)|}{|E(G)|}(n-1) \le 1
\text{~~~~~ for any color } c \in \C.
\end{equation*}

We can regard $|E_c(G)|/|E(G)|$ as
the probability of a color $c$
appearing in $G$.
The term ``Heterochromatic'' means that
any color appears once or zero times.
Thus,
we can interpret Theorem \ref{thm_Suzuki2006AGraph_2}
as saying that
if each color probability is at most $1/(n-1)$ in $G$
then $G$ has a spanning tree $T$
such that
each color probability is $1/(n-1)$ or $0$ in $T$.

\subsection{$f$-Chromatic spanning trees and forests}
The term ``Heterochromatic'' means that
any color appears at most once.
Suzuki
\cite{Suzuki2013AForests}
generalized ``once'' to a mapping $f$
from a given color set $\C$
to the set $\mathbb{Z}_{\ge 0}$
of non-negative integers,
and defined \textit{$f$-chromatic} graphs
as follows.

\begin{definition}[Suzuki \cite{Suzuki2013AForests}]
Let $G$ be an edge-colored graph.
Let $f$ be a mapping from $\C$
to $\mathbb{Z}_{\ge 0}$.
$G$ is said to be \textit{$f$-chromatic}
if $|E_c(G)| \le f(c)$ for any color $c \in \C$. 
\label{def_Suzuki2013AForests}
\end{definition}

Fig. \ref{fig_180825_1} shows an example
of an $f$-chromatic spanning tree
of an edge-colored graph.
For the color set $\C = \{1,2,3,4,5,6,7 \}$,
a mapping $f$ is given as follows:
\begin{gather*}
f(1)=3,
f(2)=2,
f(3)=3,
f(4)=0,
f(5)=0,
f(6)=1,
f(7)=2.
\end{gather*}
The left edge-colored graph
has the right $f$-chromatic spanning tree,
where each color $c$ appears at most $f(c)$ times.

\fig{0.85\textwidth}{!}
{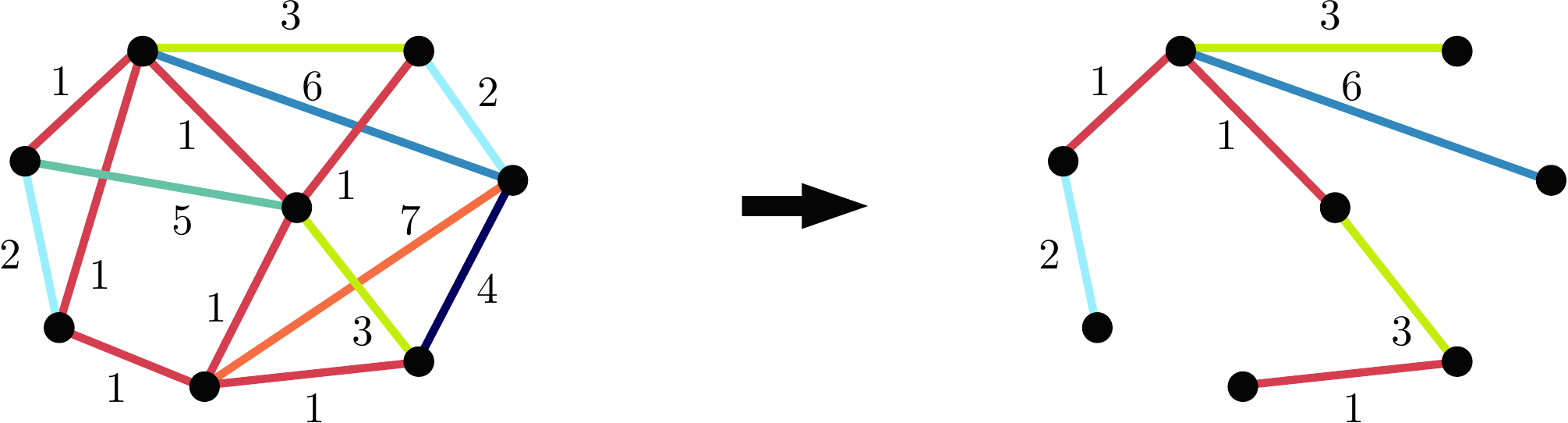}
{An $f$-chromatic spanning tree
of an edge-colored graph.}
{fig_180825_1}

Suzuki
\cite{Suzuki2013AForests}
presented
the following necessary and sufficient condition
for edge-colored graphs to have
an $f$-chromatic spanning forest
with exactly $m$ components.

\begin{theorem}[Suzuki \cite{Suzuki2013AForests}]
Let $G$ be an edge-colored graph of order $n$.
Let $f$ be a mapping from $\C$
to $\mathbb{Z}_{\ge 0}$.
Let $m$ be a positive integer such that $n \ge m$.
$G$ has
an $f$-chromatic spanning forest
with exactly $m$ components
if and only if
\begin{equation*}
\omega(G-E_R(G)) \le m+\sum_{c \in R}f(c)
\text{~~~~~ for any } R \subseteq \C.
\end{equation*}
\label{thm_Suzuki2013AForests_1}
\end{theorem}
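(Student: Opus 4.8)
The plan is to get the ``only if'' direction from a one-line counting argument and the ``if'' direction from the matroid intersection theorem. For necessity, let $F$ be an $f$-chromatic spanning forest of $G$ with exactly $m$ components, so $|E(F)| = n-m$, and fix $R\subseteq\C$. Deleting from $F$ the edges with colors in $R$ removes exactly $|E_R(F)| = \sum_{c\in R}|E_c(F)| \le \sum_{c\in R}f(c)$ edges of the forest $F$, hence creates at most that many new components, giving $\omega(F-E_R(F)) \le m+\sum_{c\in R}f(c)$; since $F-E_R(F)$ is a spanning subgraph of $G-E_R(G)$ we conclude $\omega(G-E_R(G)) \le \omega(F-E_R(F)) \le m+\sum_{c\in R}f(c)$.

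For sufficiency, I would introduce two matroids on the ground set $E(G)$: the cycle matroid $M_G$ of $G$, with $r_{M_G}(A) = n-\omega((V(G),A))$, and the partition matroid $M_f$ whose blocks are the color classes $E_c(G)$ (they partition $E(G)$ because $color$ is a function) with capacity $f(c)$, so that $r_{M_f}(B) = \sum_{c\in\C}\min(f(c),|B\cap E_c(G)|)$. A subgraph of $G$ is an $f$-chromatic forest exactly when its edge set is independent in both matroids, and such a subgraph with $n-m$ edges is automatically a spanning forest with exactly $m$ components. So by the matroid intersection theorem it suffices to show
\[ r_{M_G}(A) + r_{M_f}(E(G)\setminus A) \ge n-m \qquad\text{for every } A\subseteq E(G). \]

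The heart of the argument is to bring an arbitrary $A$ into a form where the hypothesis applies. Among all $A$ minimizing the left-hand side I would choose one with $|A|$ as large as possible, and then show that for every color $c$ either $E_c(G)\subseteq A$ or $|E_c(G)\setminus A| > f(c)$: otherwise, moving one edge of $E_c(G)\setminus A$ into $A$ lowers the $M_f$-term of $c$ by exactly $1$ while raising $r_{M_G}$ by at most $1$, producing a strictly larger minimizer. Setting $R := \{c\in\C : E_c(G)\not\subseteq A\}$, this dichotomy gives $r_{M_f}(E(G)\setminus A) = \sum_{c\in R}f(c)$, and $E(G)\setminus E_R(G) = \bigcup_{c\notin R}E_c(G)\subseteq A$ makes $(V(G),A)$ a spanning subgraph containing $G-E_R(G)$, so $r_{M_G}(A) = n-\omega((V(G),A)) \ge n-\omega(G-E_R(G))$. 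Adding these and using the hypothesis $\omega(G-E_R(G))\le m+\sum_{c\in R}f(c)$ gives the bound $\ge n-m$; matroid intersection then yields a common independent set of size $n-m$, i.e.\ the desired forest.

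I expect the exchange step above to be the only genuine obstacle — one has to verify carefully that the ``push an edge into $A$'' move never increases the intersection objective, so that reducing to sets $A\supseteq E(G)\setminus E_R(G)$ is legitimate; the rank formulas and the counting in the necessity direction are routine. If one wanted to avoid matroid intersection, a self-contained augmenting-path construction of the forest, or a reduction to Theorem~\ref{thm_Suzuki2006AGraph_1} via recoloring, should also work, but the formulation above seems the most economical.
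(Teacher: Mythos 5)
Your proposal is correct, but note that the paper you are reading does not prove Theorem \ref{thm_Suzuki2013AForests_1} at all: it is quoted from the earlier paper \cite{Suzuki2013AForests} and used as a black box (in the proofs of Theorem \ref{thm_180815_1} and Lemma \ref{lem_180923_1}). Your route is genuinely different from the one in that source. The original argument is elementary and extremal: necessity by the same counting you give, and sufficiency by taking an $f$-chromatic spanning forest with the maximum number of edges and showing, via an edge-exchange argument, that if it had more than $m$ components one could exhibit a color set $R$ (essentially the colors whose quota $f(c)$ is saturated) violating $\omega(G-E_R(G)) \le m+\sum_{c \in R}f(c)$; this is the same style of exchange reasoning that the present paper uses in the proof of Lemma \ref{lem_180923_2}. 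You instead intersect the cycle matroid of $G$ with the capacitated partition matroid of the color classes and check Edmonds' min-max bound; your reduction of an arbitrary $A$ to one of the form $A \supseteq E(G)\setminus E_R(G)$ is sound as sketched --- adding an edge $e \in E_c(G)\setminus A$ with $|E_c(G)\setminus A| \le f(c)$ raises $r_{M_G}$ by at most $1$ and lowers the color-$c$ term of $r_{M_f}$ by exactly $1$, so a minimizer of maximum cardinality satisfies your dichotomy, and the rest of the computation gives $r_{M_G}(A)+r_{M_f}(E(G)\setminus A) \ge n - \omega(G-E_R(G)) + \sum_{c\in R}f(c) \ge n-m$, whence a common independent set of size $n-m$, which is exactly an $f$-chromatic spanning forest with $m$ components. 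What each approach buys: the matroid-intersection proof is shorter, explains where the min-max condition ``comes from,'' and immediately yields a polynomial-time algorithm for finding the forest; the original exchange proof is self-contained, requires no matroid machinery, and its technique transfers directly to the $(g,f)$ refinements developed in this paper (Lemmas \ref{lem_180923_1} and \ref{lem_180923_2}), where one needs the forest to interact with a second forest $F_g$ rather than just to be large.
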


Suzuki
\cite{Suzuki2013AForests}
proved the following Theorem
by using Theorem
\ref{thm_Suzuki2013AForests_1}.

\begin{theorem}[Suzuki \cite{Suzuki2013AForests}]
Let $G$ be an edge-colored graph of order $n$.
Let $f$ be a mapping from $\C$
to $\mathbb{Z}_{\ge 0}$.
Let $m$ be a positive integer such that $n \ge m$.
If $|E(G)|>\binom{n-m}{2}$ and
\begin{equation*}
\frac{|E_c(G)|}{|E(G)|}(n-m) \le f(c)
\text{~~~~~ for any color } c \in \C,
\end{equation*}
then $G$ has an $f$-chromatic spanning forest
with exactly $m$ components.
\label{thm_Suzuki2013AForests_2}
\end{theorem}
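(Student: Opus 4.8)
The plan is to deduce the statement from the criterion in Theorem~\ref{thm_Suzuki2013AForests_1}: it suffices to show that $\omega(G-E_R(G)) \le m + \sum_{c\in R} f(c)$ for every $R \subseteq \C$. So fix $R$, put $s = \sum_{c\in R} f(c)$ and $k = \omega(G-E_R(G))$, and aim for $k \le m+s$. A spanning forest never has more components than vertices, so $k \le n$; hence if $s \ge n-m$ (in particular if $n=m$), then $k \le n \le m+s$ and we are done. From now on I may assume $s \le n-m-1$, so that $n-m-s \ge 1$ and $n-m \ge 1$.

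The heart of the argument is to estimate, in two ways, the number of edges of $G-E_R(G)$. Since this graph has $n$ vertices and exactly $k$ components, the standard convexity bound (concentrate the non-isolated vertices into a single clique) gives $|E(G-E_R(G))| \le \binom{n-k+1}{2}$. On the other hand $|E(G-E_R(G))| = |E(G)| - \sum_{c\in R}|E_c(G)|$, and the density hypothesis rearranges (using $n-m\ge 1$) to $|E_c(G)| \le \frac{|E(G)|}{n-m}f(c)$, whence $\sum_{c\in R}|E_c(G)| \le \frac{|E(G)|}{n-m}\,s$. Combining the two estimates yields $|E(G)|\cdot\frac{n-m-s}{n-m} \le \binom{n-k+1}{2}$.

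Now I would argue by contradiction. If $k \ge m+s+1$, then $0 \le n-k+1 \le n-m-s$, and since $j\mapsto\binom{j}{2}$ is nondecreasing on $\mathbb{Z}_{\ge 0}$, $\binom{n-k+1}{2} \le \binom{n-m-s}{2} = \frac{(n-m-s)(n-m-s-1)}{2}$. Substituting into the inequality from the previous paragraph and cancelling the positive factor $n-m-s$ gives $|E(G)| \le \frac{(n-m)(n-m-s-1)}{2} \le \frac{(n-m)(n-m-1)}{2} = \binom{n-m}{2}$, contradicting $|E(G)| > \binom{n-m}{2}$. Therefore $k \le m+s$, and Theorem~\ref{thm_Suzuki2013AForests_1} then delivers the desired $f$-chromatic spanning forest with exactly $m$ components.

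I do not expect a serious obstacle here; the only points needing care are recalling the sharp bound $\binom{n-k+1}{2}$ on the number of edges of an $n$-vertex graph with $k$ components, keeping track of when the relevant quantities are positive so that the divisions are legitimate, and correctly isolating the degenerate cases ($n=m$, or $\sum_{c\in R}f(c)\ge n-m$) in which the hypothesis $|E(G)|>\binom{n-m}{2}$ is not itself what forces the conclusion.
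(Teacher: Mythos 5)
Your proposal is correct and follows essentially the same route as the paper: it verifies the criterion of Theorem~\ref{thm_Suzuki2013AForests_1} by double-counting the edges of $G-E_R(G)$, using the bound $|E(H)|\le\binom{|V(H)|-\omega(H)+1}{2}$ (the paper's Lemma~\ref{lem_180923_3}) together with the color-probability hypothesis to force $|E(G)|\le\binom{n-m}{2}$, exactly as in the paper's proof of the more general Theorem~\ref{thm_180826_1} (which reduces to this statement when $g\equiv 0$). The only difference is presentational: you check the condition directly for each $R$ (with a clean case split on $\sum_{c\in R}f(c)\ge n-m$), whereas the paper argues by a global contradiction extracting a violating $R$.
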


A heterochromatic graph is
an $f$-chromatic graph with $f(c) = 1$
for every color $c$.
Thus, these two theorems include
Theorem
\ref{thm_Suzuki2006AGraph_1} and
Theorem
\ref{thm_Suzuki2006AGraph_2}.
In this paper,
we will further generalize these theorems
and study color probability distributions
of edge-colored complete graphs
and its spanning trees.

\section{Main results}
In this paper,
I propose a \textit{$(g,f)$-chromatic graph}
as an edge-colored graph
where each color $c$ appears
at least $g(c)$ times and at most $f(c)$ times.
I also present a necessary and sufficient condition
for edge-colored graphs
to have a $(g,f)$-chromatic spanning forest
with exactly $m$ components.
(Theorem \ref{thm_180815_1}).
Using this criterion,
I show that
an edge-colored complete graph $G$ has
a spanning tree
with a color probability distribution
``similar'' to that of $G$
(Theorem \ref{thm_180913_1}).
Moreover, I conjecture that
an edge-colored complete graph $G$
of order $2n$ $(n \ge 3)$
can be partitioned into
$n$ edge-disjoint spanning trees
such that each has a color probability distribution
``similar'' to that of $G$
(Conjecture \ref{conj_180910_1}).

\subsection{%
$(g,f)$-Chromatic spanning trees and forests}
We begin with the definition
of a \textit{$(g,f)$-chromatic} graph.

\begin{definition}
Let $G$ be an edge-colored graph.
Let $g$ and $f$ be mappings from $\C$
to $\mathbb{Z}_{\ge 0}$.
$G$ is said to be \textit{$(g,f)$-chromatic}
if $g(c) \le |E_c(G)| \le f(c)$
for any color $c \in \C$. 
\label{def_180815_1}
\end{definition}

Fig. \ref{fig_180825_2} shows
a $(g,f)$-chromatic spanning tree
of an edge-colored graph.
For the color set $\C = \{1,2,3,4,5,6,7 \}$,
mappings $g$ and $f$ are given as follows:
\begin{gather*}
g(1)=1,
g(2)=1,
g(3)=2,
g(4)=0,
g(5)=0,
g(6)=1,
g(7)=0,\\
f(1)=3,
f(2)=2,
f(3)=3,
f(4)=0,
f(5)=0,
f(6)=1,
f(7)=2.
\end{gather*}
The left edge-colored graph
has the right $(g,f)$-chromatic spanning tree,
where each color $c$ appears
at least $g(c)$ times and at most $f(c)$ times.

\fig{0.85\textwidth}{!}
{fig_180825_1.pdf}
{A $(g,f)$-chromatic spanning tree
of an edge-colored graph.}
{fig_180825_2}

We will see more examples.
First, we suppose that
$g$ and $f$ are given as follows:
\begin{gather*}
g(1)=3,
g(2)=1,
g(3)=3,
g(4)=0,
g(5)=0,
g(6)=1,
g(7)=2,\\
f(1)=3,
f(2)=2,
f(3)=3,
f(4)=0,
f(5)=0,
f(6)=1,
f(7)=2.
\end{gather*}
Then, the left edge-colored graph
in Fig. \ref{fig_180825_2}
has no $(g,f)$-chromatic spanning trees,
because $g(1)+g(2)+ \cdots +g(7)$ exceeds $7$,
the size of a spanning tree of the graph.

Next, in Fig. \ref{fig_180826_1},
we suppose that
$g$ and $f$ are given as follows:
\begin{gather*}
g(1)=0,
g(2)=2,
g(3)=2,
g(4)=0,
g(5)=0,
g(6)=1,
g(7)=0,\\
f(1)=3,
f(2)=2,
f(3)=3,
f(4)=0,
f(5)=0,
f(6)=1,
f(7)=2.
\end{gather*}
Then, in the left edge-colored graph,
any subgraph having $g(2)$, $g(3)$, and $g(6)$ edges
colored with $2$, $3$, and $6$, respectively,
contains the right subgraph,
which has a cycle.
Thus, the left graph
has no $(g,f)$-chromatic spanning trees.

\fig{0.85\textwidth}{!}
{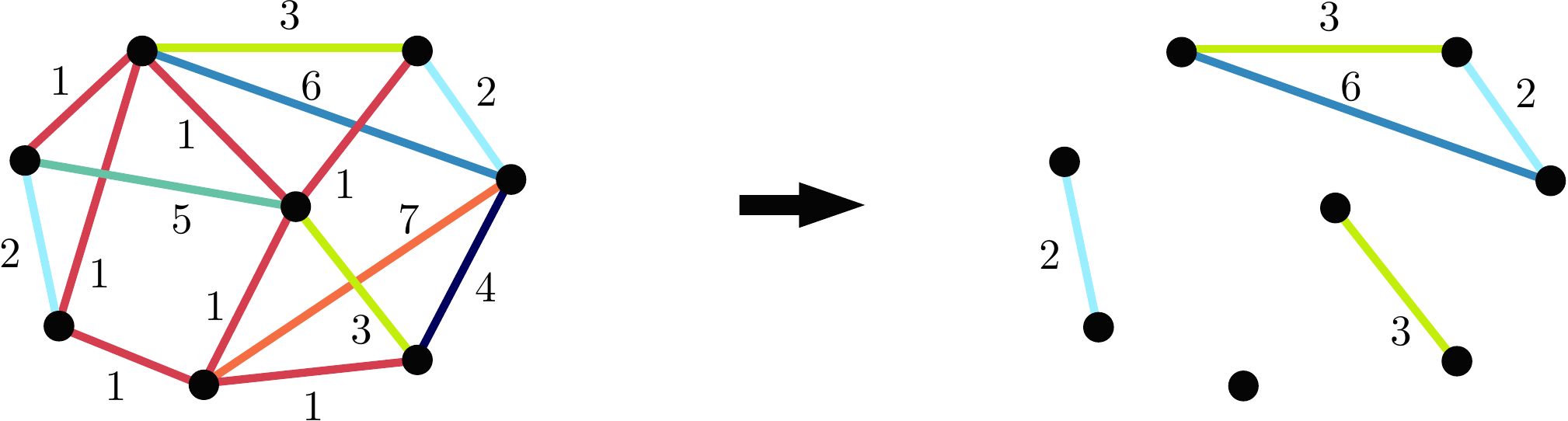}
{The mapping $g$ forces us to use a cycle.}
{fig_180826_1}

The following is the main theorem,
which gives a necessary and sufficient condition
for edge-colored graphs
to have a $(g,f)$-chromatic spanning tree
as a corollary.

\begin{theorem}
Let $G$ be an edge-colored graph of order $n$.
Let $g$ and $f$ be mappings from $\C$
to $\mathbb{Z}_{\ge 0}$
such that $g(c) \le f(c)$ for any $c \in \C$.
Let $m$ be a positive integer
such that $n \ge m + \sum_{c \in \C}g(c)$.
$G$ has
a $(g,f)$-chromatic spanning forest
with exactly $m$ components
if and only if
\begin{equation*}
\omega(G-E_R(G)) \le
\min
\{~
m+\sum_{c \in R}f(c),~~
n-\sum_{c \in \C \setminus R}g(c)
~\}
\text{~~~ for any } R \subseteq \C.
\end{equation*}
\label{thm_180815_1}
\end{theorem}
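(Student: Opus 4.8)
The plan is to establish necessity by a short counting argument, and sufficiency by induction on $s:=\sum_{c\in\C}g(c)$, with Theorem~\ref{thm_Suzuki2013AForests_1} as the base case. For necessity, let $F$ be a $(g,f)$-chromatic spanning forest of $G$ with exactly $m$ components, so $|E(F)|=n-m$. For every $R\subseteq\C$ the graph $F-E_R(F)$ is a spanning subgraph of $G-E_R(G)$, hence $\omega(G-E_R(G))\le\omega(F-E_R(F))=m+|E_R(F)|$, since deleting $k$ edges from a forest creates exactly $k$ new components. From $|E_R(F)|=\sum_{c\in R}|E_c(F)|\le\sum_{c\in R}f(c)$ we get the first term of the minimum, and from $|E_R(F)|=(n-m)-\sum_{c\in\C\setminus R}|E_c(F)|\le(n-m)-\sum_{c\in\C\setminus R}g(c)$ the second.

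For sufficiency with $s=0$, the second term of the minimum equals $n$ and is redundant, so the hypothesis coincides with that of Theorem~\ref{thm_Suzuki2013AForests_1}, which provides an $f$-chromatic (equivalently $(0,f)$-chromatic) spanning forest with $m$ components. Suppose $s\ge1$ and fix $c_0\in\C$ with $g(c_0)\ge1$. The structural core is that $R\mapsto\omega(G-E_R(G))$ is supermodular — it equals $n$ minus the rank in the graphic matroid of $G$ of the set $E(G)\setminus E_R(G)$, and matroid rank is submodular — while the bound $h(R):=\min\{\,m+\sum_{c\in R}f(c),\ n-\sum_{c\in\C\setminus R}g(c)\,\}$ is nondecreasing in $R$ and submodular, being a pointwise minimum of two modular functions. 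Hence the family of \emph{tight} sets, those $R$ with $\omega(G-E_R(G))=h(R)$, is closed under union and intersection, so the tight sets containing $c_0$ have a unique minimal element $R_0$. I claim some edge $e$ of color $c_0$ joins two different components of $G-E_{R_0}(G)$. Otherwise every color-$c_0$ edge lies inside a component of $G-E_{R_0}(G)$, so adding these edges back merges nothing and $\omega(G-E_{R_0\setminus\{c_0\}}(G))=\omega(G-E_{R_0}(G))=h(R_0)$; together with $\omega(G-E_{R_0\setminus\{c_0\}}(G))\le h(R_0\setminus\{c_0\})\le h(R_0)$ this forces $h(R_0\setminus\{c_0\})=h(R_0)$. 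But passing from $R_0$ to $R_0\setminus\{c_0\}$ drops the first argument of the minimum defining $h$ by $f(c_0)$ and the second by $g(c_0)$, and $f(c_0)\ge g(c_0)\ge1$, so $h$ strictly decreases — a contradiction.

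Now contract such an edge $e=xy$ to obtain a graph $G'$ of order $n-1$, set $g'(c_0)=g(c_0)-1$, $f'(c_0)=f(c_0)-1$ with $g',f'$ unchanged on the other colors, and keep $m$; then $g'\le f'$ and $n-1\ge m+\sum_{c}g'(c)$. One checks $(G',g',f',m)$ satisfies the hypothesis. For $R\not\ni c_0$, contracting an edge of $G-E_R(G)$ does not change $\omega$, and after the bookkeeping $h'(R)=h(R)$. For $R\ni c_0$ we have $h'(R)=h(R)-1$, and $G'-E_R(G')$ is obtained from $G-E_R(G)$ by identifying $x$ with $y$, which lowers $\omega$ by $0$ or $1$; if such an $R$ is tight then $R_0\subseteq R$, so the component partition of $G-E_R(G)$ refines that of $G-E_{R_0}(G)$, whence $x$ and $y$, lying in different components of $G-E_{R_0}(G)$, lie in different components of $G-E_R(G)$ and $\omega$ drops by exactly $1$; if $R$ is not tight, the slack $h(R)-\omega(G-E_R(G))\ge1$ absorbs a possible non-drop. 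By the induction hypothesis $G'$ has a $(g',f')$-chromatic spanning forest $F'$ with $m$ components, and then $F'$ together with $e$ is a $(g,f)$-chromatic spanning forest of $G$ with $m$ components.

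The main obstacle is the inductive step, specifically the verification that the contracted instance still satisfies the criterion: this is exactly where the supermodular/submodular interplay of the two sides is essential, both to produce the edge $e$ and to keep $\omega(G'-E_R(G'))$ under control on the tight sets $R$. A secondary technical point is that contracting $e$ may create parallel edges; one handles this by allowing multigraphs throughout, since the criterion and the base case only involve the quantities $\omega(G-E_R(G))$, which are insensitive to this distinction.
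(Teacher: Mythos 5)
Your argument is correct in outline but follows a genuinely different route from the paper. For necessity you argue directly: deleting the $|E_R(F)|$ edges of colors in $R$ from the forest $F$ creates exactly that many new components, and bounding $|E_R(F)|$ above by $\sum_{c\in R}f(c)$ and by $(n-m)-\sum_{c\in\C\setminus R}g(c)$ gives both terms of the minimum at once; the paper instead derives the two bounds separately by quoting Theorem~\ref{thm_Suzuki2013AForests_1} and a lemma on $(g,g)$-chromatic forests. For sufficiency the paper never leaves simple graphs: it gets an $f$-chromatic spanning forest with $m$ components from Theorem~\ref{thm_Suzuki2013AForests_1}, gets a $(g,g)$-chromatic forest from a separate lemma (itself a clever application of Theorem~\ref{thm_Suzuki2013AForests_1} with $m=n-\sum_c g(c)$), and then merges the two by an exchange argument on a forest $F_f$ chosen to maximize the number of edges shared with $F_g$. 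You instead induct on $\sum_c g(c)$, using the supermodularity of $R\mapsto\omega(G-E_R(G))$ and submodularity of the bound $h$ to get a lattice of tight sets, extracting a color-$c_0$ edge joining two components of $G-E_{R_0}(G)$ for the minimal tight set $R_0\ni c_0$, and contracting it. Your scheme trades the paper's exchange lemma for matroid-style tight-set machinery plus contraction bookkeeping; it is more structural but needs multigraphs, which the paper's proof avoids.

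Two points need repair. First, your justification that $h$ is submodular (``a pointwise minimum of two modular functions'') is false as a general principle: on ground set $\{1,2\}$ the minimum of the modular functions $|R\cap\{1\}|$ and $|R\cap\{2\}|$ takes values $0,0,0,1$ and violates submodularity. What saves you is the hypothesis $g\le f$: writing $a(R)=m+\sum_{c\in R}f(c)$ and $b(R)=n-\sum_{c\in\C\setminus R}g(c)$, the difference $a-b$ is a modular function with nonnegative increments $f(c)-g(c)$, hence nondecreasing, and $h=b+\min(a-b,0)$ is modular plus a concave nondecreasing function of a nonnegative-weight modular function, hence submodular. Since the whole tight-set lattice (and thus both the existence of the edge $e$ relative to the \emph{minimal} tight set and the verification of the contracted instance on tight $R\ni c_0$) rests on this, you should state the correct argument. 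Second, after contraction your base case and induction hypothesis are statements about edge-colored \emph{multigraphs}, while Theorem~\ref{thm_Suzuki2013AForests_1} is stated for simple graphs; your remark that the quantities $\omega(G-E_R(G))$ are ``insensitive to this distinction'' is not a reduction, because parallel edges of distinct colors cannot be discarded without changing both the color counts and $\omega(G-E_R(G))$. The multigraph version is true (Suzuki's augmentation proof does not use simplicity, and it also follows from matroid intersection with the graphic matroid of the multigraph), but you must either check and say this or supply the multigraph base case yourself; loops created by later contractions are harmless but should be mentioned.
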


This theorem is proved in Section \ref{proof22}.
Note that
the size of a spanning forest
with exactly $m$ components of $G$ is $n-m$.
If $G$ has a $(g,f)$-chromatic spanning forest
with exactly $m$ components,
then the size of the forest
is at least $\sum_{c \in \C}g(c)$.
Thus,
the condition $n \ge m + \sum_{c \in R}g(c)$
is necessary.

We see the above last example again.
Let $G$ be the left graph in
Fig. \ref{fig_180826_1}.
$G$ has no $(g,f)$-chromatic spanning trees.
Thus, by Theorem \ref{thm_180815_1},
\begin{equation*}
\omega(G-E_R(G)) >
\min
\{~
1+\sum_{c \in R}f(c),~~
8-\sum_{c \in \C \setminus R}g(c)
~\}
\text{~~~ for some } R \subseteq \C.
\end{equation*}
Actually,
for $R = \{ 1,4,5,7 \}$,
$G-E_R(G)$ is the right graph in
Fig. \ref{fig_180826_1}
and we have
\begin{equation*}
\omega(G-E_R(G)) = 4,~~
1+\sum_{c \in R}f(c) = 6,~~
8-\sum_{c \in \C \setminus R}g(c) = 3.
\end{equation*}

We can prove the following theorem
by using Theorem \ref{thm_180815_1}.

\begin{theorem}
Let $G$ be an edge-colored graph of order $n$.
Let $g$ and $f$ be mappings from $\C$
to $\mathbb{Z}_{\ge 0}$.
Let $m$ be a positive integer such that $n \ge m$.
If $|E(G)|>\binom{n-m}{2}$ and
\begin{equation*}
g(c) \le \frac{|E_c(G)|}{|E(G)|}(n-m) \le f(c)
\text{~~~~~ for any color } c \in \C,
\end{equation*}
then $G$ has a $(g,f)$-chromatic spanning forest
with exactly $m$ components.
\label{thm_180826_1}
\end{theorem}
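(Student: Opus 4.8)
The plan is to deduce the theorem from the characterisation in Theorem~\ref{thm_180815_1}. If $n=m$ the statement is trivial: the hypothesis then forces $g(c)=0$ for every $c$, and the edgeless spanning forest on the $n$ vertices is a $(g,f)$-chromatic spanning forest with $m=n$ components. So assume $n>m$. I would first verify the hypotheses of Theorem~\ref{thm_180815_1}: that $g(c)\le f(c)$ for all $c$ is immediate from the displayed chain of inequalities, and summing $g(c)\le\frac{|E_c(G)|}{|E(G)|}(n-m)$ over all $c\in\C$, together with $\sum_{c\in\C}|E_c(G)|=|E(G)|$, gives $\sum_{c\in\C}g(c)\le n-m$, i.e. $n\ge m+\sum_{c\in\C}g(c)$. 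It therefore suffices to check, for every $R\subseteq\C$,
\[
\omega(G-E_R(G))\le\min\Bigl\{\,m+\sum_{c\in R}f(c),\ \ n-\sum_{c\in\C\setminus R}g(c)\,\Bigr\}.
\]

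Fix $R\subseteq\C$, put $H=G-E_R(G)$ and $\omega=\omega(H)$, and record two elementary facts: $|E(H)|=\sum_{c\in\C\setminus R}|E_c(G)|$, and a graph on $n$ vertices with $\omega$ components has at most $\binom{n-\omega+1}{2}$ edges, so $|E(H)|\le\binom{n-\omega+1}{2}$. For the first term of the minimum I would essentially reproduce the proof of Theorem~\ref{thm_Suzuki2013AForests_2}: writing $s=\sum_{c\in R}f(c)$, the bound $|E_c(G)|\le\frac{|E(G)|}{n-m}f(c)$ gives $|E(H)|\ge|E(G)|\cdot\frac{n-m-s}{n-m}$; if $s\ge n-m$ then $\omega\le n\le m+s$ and there is nothing to prove, while if $s\le n-m-1$ then, assuming $\omega\ge m+s+1$ and combining $|E(H)|\le\binom{n-\omega+1}{2}$ with $|E(G)|>\binom{n-m}{2}$, one gets $(n-\omega)(n-\omega+1)>(n-m-1)(n-m-s)$ on one hand and $(n-\omega)(n-\omega+1)\le(n-m-s-1)(n-m-s)$ on the other (monotonicity of $x\mapsto x(x+1)$ on $x\ge0$), which after cancelling $n-m-s>0$ forces $s<0$, a contradiction.

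For the second term — the genuinely new part, which controls $g$ — set $G_0=\sum_{c\in\C\setminus R}g(c)$ and $r=n-\omega$ (the number of edges of a spanning forest of $H$); the goal is $r\ge G_0$. Summing $g(c)\le\frac{|E_c(G)|}{|E(G)|}(n-m)$ over $c\in\C\setminus R$ and using $|E(H)|=\sum_{c\in\C\setminus R}|E_c(G)|\le\binom{r+1}{2}$ yields $G_0\le\frac{n-m}{|E(G)|}\binom{r+1}{2}$. If $r\ge n-m$ then $r\ge n-m\ge G_0$; otherwise $r\le n-m-1$, and if moreover $r\le G_0-1$ (so $G_0\ge1$) then $\binom{r+1}{2}\le\binom{G_0}{2}$, whence $G_0\le\frac{(n-m)(G_0-1)G_0}{2|E(G)|}$, and dividing by $G_0$ and using $G_0\le n-m$ gives $2|E(G)|\le(n-m)(G_0-1)\le(n-m)(n-m-1)=2\binom{n-m}{2}$, contradicting $|E(G)|>\binom{n-m}{2}$. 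Hence $r\ge G_0$ in all cases, both terms of the minimum are verified, and Theorem~\ref{thm_180815_1} then produces the required $(g,f)$-chromatic spanning forest.

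I expect the second term to be the main obstacle. The point is to pick the right auxiliary quantity — the rank $r=n-\omega$ of $H$ rather than $\omega$ itself — to combine the two edge-count inequalities in the correct direction, and in particular to handle the boundary regime where $G_0$ is close to $n-m$ and $H$ is thereby forced to be almost spanning-connected. The first term, by contrast, is only a routine transcription of the argument already behind Theorem~\ref{thm_Suzuki2013AForests_2}.
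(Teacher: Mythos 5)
Your proposal is correct and takes essentially the same route as the paper: both reduce the statement to Theorem~\ref{thm_180815_1} (after noting $\sum_{c\in\C}g(c)\le n-m$) and then play the hypotheses $g(c)\le\frac{|E_c(G)|}{|E(G)|}(n-m)\le f(c)$ and $|E(G)|>\binom{n-m}{2}$ against the edge bound $|E(H)|\le\binom{n-\omega(H)+1}{2}$ of Lemma~\ref{lem_180923_3}. The only difference is organizational: the paper runs one unified contradiction (its claim extracts $\omega\ge m+1$ and $\omega\ge n+1-\frac{|E(G')|}{|E(G)|}(n-m)$ from whichever term of the minimum fails, then does a single count), whereas you verify the two terms of the minimum separately and treat $n=m$ explicitly, which is a harmless repackaging of the same computation.
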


This theorem is proved in Section \ref{proof23}.
Note that
an $f$-chromatic graph is
a $(g,f)$-chromatic graph with $g(c) = 0$
for any color $c$,
and $\omega(G-E_R(G)) \le n$
for any $R \subseteq \C$
since the number of components of any subgraph
of a graph of order $n$ is at most $n$.
Thus,
Theorem
\ref{thm_180815_1}
and
\ref{thm_180826_1}
include
Theorem
\ref{thm_Suzuki2013AForests_1}
and
\ref{thm_Suzuki2013AForests_2}.

\subsection{%
Color probability distributions
of edge-colored graphs}
\label{sec_Cpd}

We call $|E_c(G)|/|E(G)|$
the \textit{color probability}
of a color $c$
in an edge-colored graph $G$.
The \textit{color probability distribution} of $G$
is the sequence of the color probabilities.
Does a given edge-colored complete graph $G$ have
a spanning tree
with the same color probability distribution
as that of $G$ ?
Fig. \ref{fig_180830_1} shows an example.

\fig{0.75\textwidth}{!}
{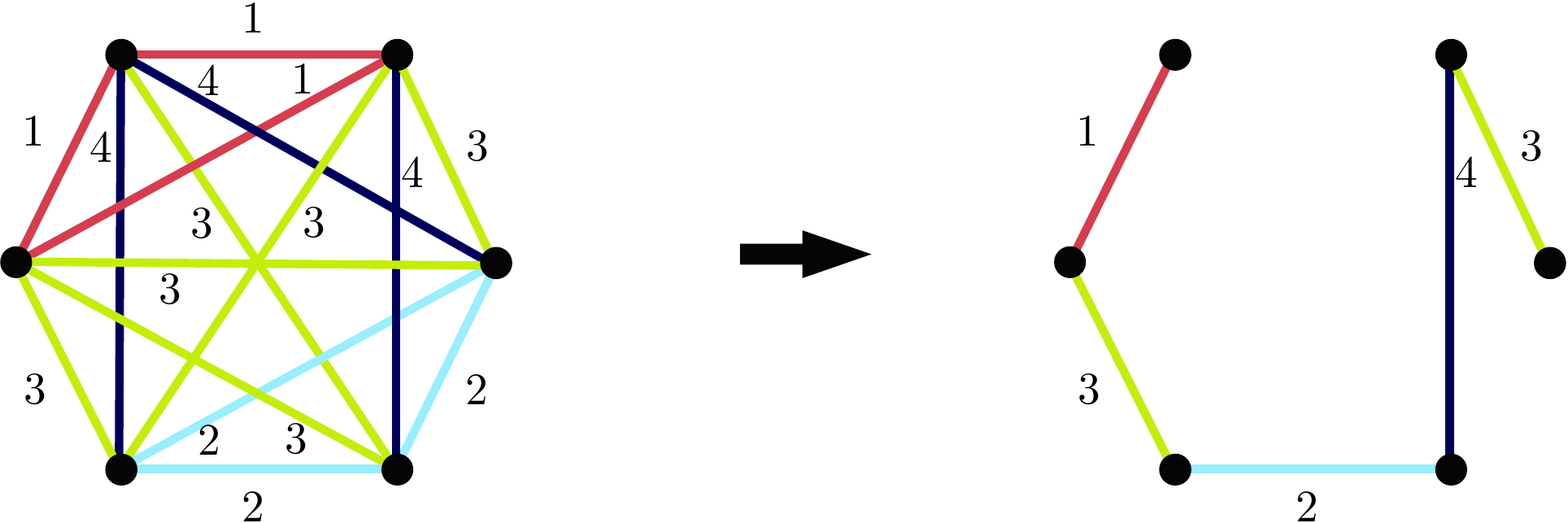}
{An edge-colored complete graph
having a spanning tree
with the same color probability distribution
as that of it.}
{fig_180830_1}

Let $G$ and $T$ be the left and right graph
in Fig. \ref{fig_180830_1}, respectively.
Then,
\begin{gather*}
|E_1(G)|=3, |E_2(G)|=3, |E_3(G)|=6, |E_4(G)|=3,
|E(G)|=15,\\
|E_1(T)|=1, |E_2(T)|=1, |E_3(T)|=2, |E_4(T)|=1,
|E(T)|=5.
\end{gather*}
Thus, both color probability distributions are
$(0.2, 0.2, 0.4, 0.2)$
(Fig. \ref{fig_180830_2}).

\fig{0.75\textwidth}{!}
{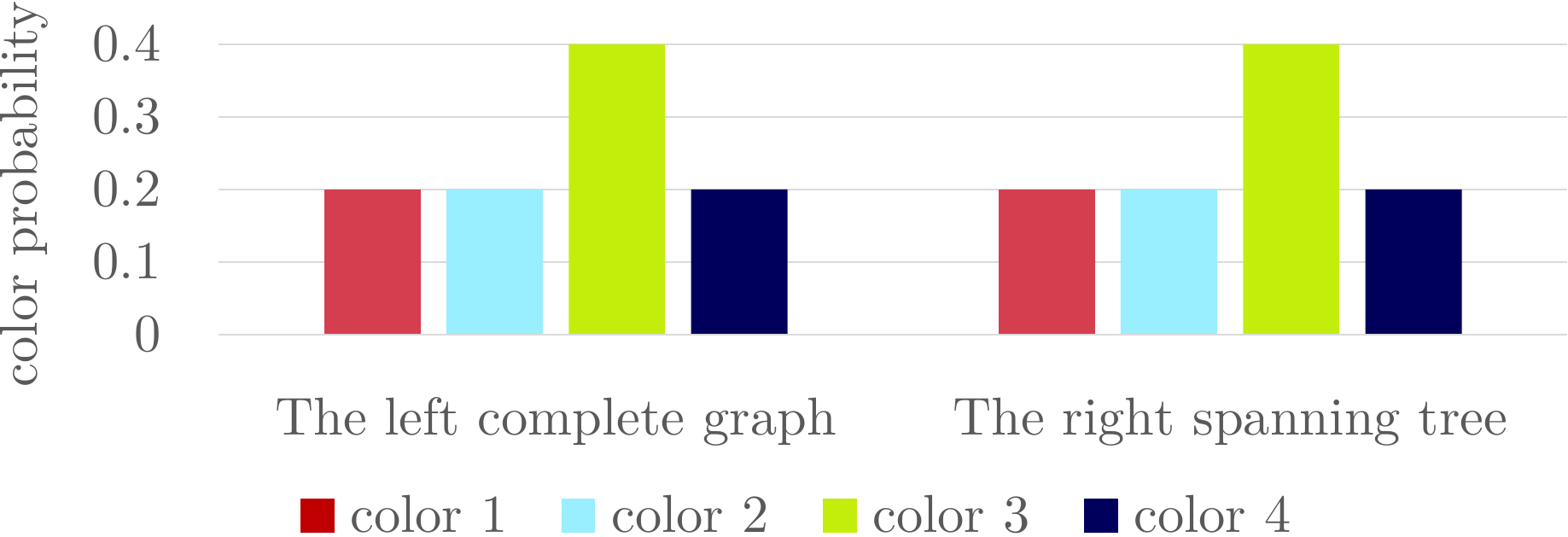}
{Color probability distributions
of the graphs in Fig. \ref{fig_180830_1}.}
{fig_180830_2}

Fig. \ref{fig_180830_3} shows another example.
In the left complete graph $G$ of order $n=6$
has no spanning trees
with the same color probability distribution
as that of $G$,
because the number of colors in $G$ is $7$
exceeding the size of a spanning tree of $G$.

\fig{0.75\textwidth}{!}
{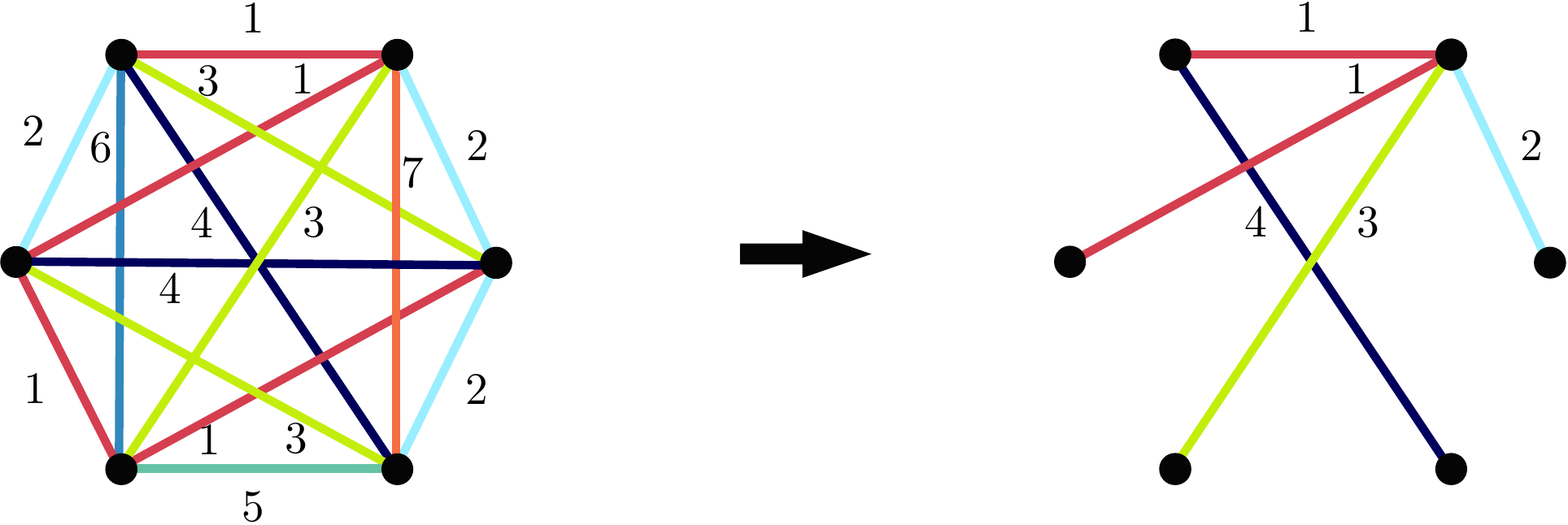}
{An edge-colored complete graph
having no spanning trees
with the same color probability distribution
as that of it,
but having a spanning tree
with a color probability distribution
similar to that of it.}
{fig_180830_3}

In other words,
for any spanning tree $T$ of $G$,
\begin{equation*}
\frac{|E_c(G)|}{|E(G)|} \ne \frac{|E_c(T)|}{|E(T)|}
\text{~~ for some color } c \in \C,
\end{equation*}
that is,
\begin{equation*}
|E_c(T)| \ne \frac{|E_c(G)|}{|E(G)|}|E(T)|
= \frac{|E_c(G)|}{|E(G)|}(n-1)
\text{~~ for some color } c \in \C.
\end{equation*}

However,
the right spanning tree has
a color probability distribution
\textit{similar} to that of $G$
(see Fig. \ref{fig_180830_4}),
in the sense that
the following condition holds:
\begin{equation*}
\left\lfloor
\frac{|E_c(G)|}{|E(G)|}(n-1)
\right\rfloor
\le
|E_c(T)|
\le
\left\lceil
\frac{|E_c(G)|}{|E(G)|}(n-1)
\right\rceil
\text{~~ for any color } c \in \C.
\end{equation*}

\fig{0.85\textwidth}{!}
{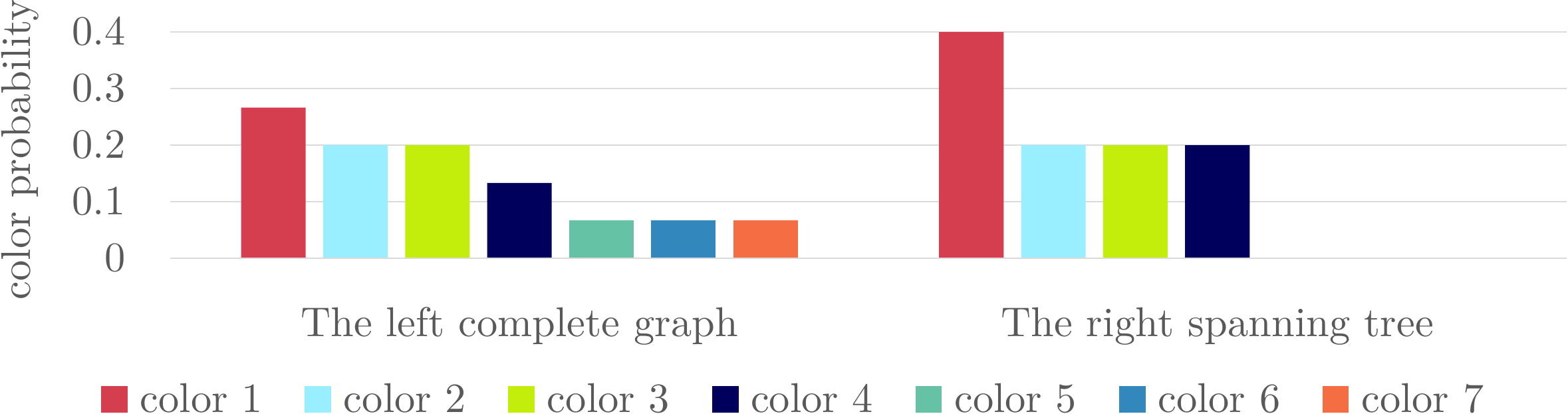}
{Color probability distributions
of the graphs in Fig. \ref{fig_180830_3}.}
{fig_180830_4}

In general,
let $G$ be an edge-colored complete graph
of order $n$,
and set
\begin{equation*}
g(c) =
\left\lfloor
\frac{|E_c(G)|}{|E(G)|}(n-1)
\right\rfloor
\text{~~and~~}
f(c) =
\left\lceil
\frac{|E_c(G)|}{|E(G)|}(n-1)
\right\rceil
\text{~~ for any color } c \in \C.
\end{equation*}
Then, by Theorem \ref{thm_180826_1},
$G$ has a $(g,f)$-chromatic spanning tree $T$.
By the definition \ref{def_180815_1},
$g(c) \le |E_c(T)| \le f(c)$
for any color $c \in \C$.
Thus, the following theorem holds.

\begin{theorem}
Any edge-colored complete graph $G$ of order $n$
has a spanning tree
with a color probability distribution
similar to that of $G$,
that is,
$G$ has a spanning tree $T$
such that
\begin{equation*}
\left\lfloor
\frac{|E_c(G)|}{|E(G)|}(n-1)
\right\rfloor
\le
|E_c(T)|
\le
\left\lceil
\frac{|E_c(G)|}{|E(G)|}(n-1)
\right\rceil
\text{~~ for any color } c \in \C.
\end{equation*}
\label{thm_180913_1}
\end{theorem}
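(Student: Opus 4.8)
The plan is to obtain Theorem~\ref{thm_180913_1} as an immediate corollary of Theorem~\ref{thm_180826_1} by specializing to spanning trees, i.e.\ to the case $m=1$. Given an edge-colored complete graph $G$ of order $n$, I would define, exactly as in the paragraph preceding the statement,
\begin{equation*}
g(c)=\left\lfloor \frac{|E_c(G)|}{|E(G)|}(n-1)\right\rfloor,\qquad
f(c)=\left\lceil \frac{|E_c(G)|}{|E(G)|}(n-1)\right\rceil
\end{equation*}
for every color $c\in\C$. Since $\frac{|E_c(G)|}{|E(G)|}(n-1)$ is a nonnegative rational, both $g(c)$ and $f(c)$ are nonnegative integers with $g(c)\le f(c)$, so $g$ and $f$ are legitimate inputs to Theorem~\ref{thm_180826_1}.

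Next I would verify the two hypotheses of Theorem~\ref{thm_180826_1} with $m=1$. First, $m=1$ is a positive integer with $n\ge m$ (the cases $n=1,2$ being trivial, since then a spanning tree has at most one edge and every color present is forced to appear the required number of times). Second, $|E(G)|=\binom{n}{2}>\binom{n-1}{2}=\binom{n-m}{2}$, so the edge-count condition is satisfied. Finally, writing $x_c=\frac{|E_c(G)|}{|E(G)|}(n-1)=\frac{|E_c(G)|}{|E(G)|}(n-m)$, the chain $g(c)=\lfloor x_c\rfloor\le x_c\le\lceil x_c\rceil=f(c)$ is precisely the required inequality
\begin{equation*}
g(c)\le \frac{|E_c(G)|}{|E(G)|}(n-m)\le f(c)\qquad\text{for any color }c\in\C.
\end{equation*}
With the hypotheses in place, Theorem~\ref{thm_180826_1} produces a $(g,f)$-chromatic spanning forest of $G$ with exactly one component, that is, a spanning tree $T$; by Definition~\ref{def_180815_1} it satisfies $g(c)\le |E_c(T)|\le f(c)$ for every $c\in\C$, which is exactly the conclusion of Theorem~\ref{thm_180913_1}.

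As for the main difficulty: essentially none remains at this point, since all the substance has been absorbed into Theorem~\ref{thm_180826_1} (and, via its proof, into the necessary-and-sufficient criterion of Theorem~\ref{thm_180815_1}). The only points deserving a second glance are the boundary cases of small $n$ and the colors $c$ for which $x_c$ is an integer (so that $g(c)=f(c)$ pins $|E_c(T)|$ to a single value); both are handled uniformly by the floor/ceiling bookkeeping above. It is also worth recording, as the example in Fig.~\ref{fig_180830_3} shows, that ``similar'' cannot in general be strengthened to ``equal'': the floor and the ceiling cannot be collapsed.
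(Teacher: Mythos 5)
Your proposal is correct and is essentially identical to the paper's own argument: the author also derives Theorem~\ref{thm_180913_1} by setting $g(c)=\lfloor \frac{|E_c(G)|}{|E(G)|}(n-1)\rfloor$ and $f(c)=\lceil \frac{|E_c(G)|}{|E(G)|}(n-1)\rceil$ and applying Theorem~\ref{thm_180826_1} with $m=1$ (your explicit check that $|E(G)|=\binom{n}{2}>\binom{n-1}{2}$ is just a small detail the paper leaves implicit).
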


\begin{remark}
Theorem \ref{thm_180913_1} is equivalent
to Theorem \ref{thm_180826_1} with $m=1$
for any edge-colored complete graph $G$
of order $n$.
\label{rem_180922_1}
\end{remark}
\begin{proof}
Theorem \ref{thm_180913_1} follows
from Theorem \ref{thm_180826_1}
by the above argument.

Let $G$ be an edge-colored graph of order $n$.
Let $g$ and $f$ be mappings from $\C$
to $\mathbb{Z}_{\ge 0}$.
Suppose that Theorem \ref{thm_180913_1} holds.
Then,
$G$ has a spanning tree $T$
with a color probability distribution
similar to that of $G$.

If $g$,$f$, and $G$ satisfy the condition in Theorem \ref{thm_180826_1},
then we have
\begin{equation*}
g(c) \le
\left\lfloor
\frac{|E_c(G)|}{|E(G)|}(n-1)
\right\rfloor
\text{ and }
\left\lceil
\frac{|E_c(G)|}{|E(G)|}(n-1)
\right\rceil
\le
f(c)
\text{~~ for any color } c \in \C.
\end{equation*}

Hence,
$T$ satisfies that for any color $c \in \C$,
\begin{equation*}
g(c) \le
\left\lfloor
\frac{|E_c(G)|}{|E(G)|}(n-1)
\right\rfloor
\le
|E_c(T)|
\le
\left\lceil
\frac{|E_c(G)|}{|E(G)|}(n-1)
\right\rceil
\le f(c).
\end{equation*}

Therefore,
$T$ is a $(g,f)$-chromatic spanning tree of $G$.
\end{proof}

From Theorem \ref{thm_180913_1},
we can get the following theorem,
proved in Section \ref{proof26}.

\begin{theorem}
An edge-colored complete graph $G$ of order $n$
has a spanning tree
with the same color probability distribution
as that of $G$
if and only if
$|E_c(G)|$ is an integral multiple of $n/2$
for any color $c \in \C$.
\label{thm_180830_1}
\end{theorem}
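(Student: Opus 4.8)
The plan is to reduce everything to a short arithmetic observation combined with Theorem \ref{thm_180913_1}. First I would record the basic counts: since $G$ is a complete graph of order $n$ we have $|E(G)| = \binom{n}{2} = \frac{n(n-1)}{2}$, and every spanning tree $T$ of $G$ has $|E(T)| = n-1$. Hence a spanning tree $T$ has the same color probability distribution as $G$ if and only if, for every color $c \in \C$,
\begin{equation*}
\frac{|E_c(T)|}{n-1} = \frac{|E_c(G)|}{|E(G)|},
\quad\text{equivalently}\quad
|E_c(T)| = \frac{|E_c(G)|}{|E(G)|}(n-1) = \frac{2|E_c(G)|}{n}.
\end{equation*}
So the whole statement comes down to: $G$ has a spanning tree $T$ with $|E_c(T)| = \frac{2|E_c(G)|}{n}$ for all $c$ if and only if $\frac{2|E_c(G)|}{n}$ is a nonnegative integer for all $c$.

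For the necessity direction I would argue directly. If such a $T$ exists, then $\frac{2|E_c(G)|}{n} = |E_c(T)| \in \mathbb{Z}_{\ge 0}$, so $n \mid 2|E_c(G)|$; writing $2|E_c(G)| = kn$ with $k \in \mathbb{Z}_{\ge 0}$ gives $|E_c(G)| = k\cdot\frac{n}{2}$, i.e. $|E_c(G)|$ is an integral multiple of $n/2$. (This one formulation covers both parities of $n$: when $n$ is odd it forces $|E_c(G)|$ to be a multiple of $n$, and when $n$ is even it is exactly divisibility by $n/2$.)

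For the sufficiency direction I would invoke Theorem \ref{thm_180913_1}. Suppose $|E_c(G)|$ is an integral multiple of $n/2$ for every $c$, say $|E_c(G)| = k_c\cdot\frac{n}{2}$ with $k_c \in \mathbb{Z}_{\ge 0}$. Then
\begin{equation*}
\frac{|E_c(G)|}{|E(G)|}(n-1) = \frac{2|E_c(G)|}{n} = k_c \in \mathbb{Z}_{\ge 0},
\end{equation*}
so the floor and the ceiling appearing in Theorem \ref{thm_180913_1} both equal $k_c$. That theorem then produces a spanning tree $T$ of $G$ with $k_c \le |E_c(T)| \le k_c$, i.e. $|E_c(T)| = k_c = \frac{2|E_c(G)|}{n}$ for every color $c$; by the equivalence recorded above, $T$ has the same color probability distribution as $G$. (Equivalently one may apply Theorem \ref{thm_180826_1} with $m=1$ and $g(c)=f(c)=k_c$, after noting $|E(G)| = \binom{n}{2} > \binom{n-1}{2}$.)

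I do not expect a genuine obstacle here, since Theorem \ref{thm_180913_1} does all the combinatorial work; the only point requiring a moment of care is verifying that the phrase ``integral multiple of $n/2$'' is exactly the divisibility statement $n \mid 2|E_c(G)|$ for both odd and even $n$, and confirming that this divisibility is precisely what collapses the floor and the ceiling in Theorem \ref{thm_180913_1} to a single value, thereby pinning down $|E_c(T)|$ uniquely. It is also worth remarking that this argument shows Theorem \ref{thm_180913_1} is sharp: the floor/ceiling slack is unavoidable exactly when some $|E_c(G)|$ fails to be an integral multiple of $n/2$.
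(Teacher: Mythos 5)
Your proposal is correct and follows essentially the same route as the paper: the necessity direction is the same arithmetic observation that $|E_c(T)| = 2|E_c(G)|/n$ forces $|E_c(G)|$ to be a multiple of $n/2$, and the sufficiency direction invokes Theorem \ref{thm_180913_1} and notes that the floor and ceiling collapse to the integer $k_c$, pinning down $|E_c(T)| = k_c$. No gaps; your closing remark about sharpness of Theorem \ref{thm_180913_1} is a small bonus beyond what the paper states.
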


\subsection{%
Spanning tree
decomposition conjectures}
In 1996,
Brualdi and Hollingsworth
\cite{Brualdi1996MulticoloredGraphs}
presented the following conjecture.

\begin{conjecture}[Brualdi and Hollingsworth
\cite{Brualdi1996MulticoloredGraphs}]
A properly edge-colored complete graph
$K_{2n}$ $(n \ge 3)$
with exactly $2n-1$ colors
can be partitioned into
$n$ edge-disjoint heterochromatic spanning trees.
\label{conj_Brualdi1996MulticoloredGraphs_1}
\end{conjecture}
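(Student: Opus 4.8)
The plan is to exploit the rigid structure forced by the hypotheses and then attack the decomposition either by peeling trees off one at a time or by constructing them all at once. First I would record what ``properly colored with $2n-1$ colors'' means: since the chromatic index of $K_{2n}$ is $2n-1$, such a coloring is a $1$-factorization, so every color class $E_c$ is a perfect matching of size $n$. A heterochromatic spanning tree has $2n-1$ edges, and with only $2n-1$ colors available it must use each color exactly once. Hence the target decomposition is equivalent to a single labeling $t : E(K_{2n}) \to \{1,\dots,n\}$ for which each fiber $t^{-1}(i)$ is a spanning tree and, on every color class (a perfect matching of $n$ edges), $t$ is a bijection onto $\{1,\dots,n\}$. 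In this form the rainbow requirement is a transversal/Latin-type constraint layered on top of an ordinary tree decomposition, and I would keep both viewpoints available.

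For the iterative route I would extract rainbow spanning trees one at a time using the existence criterion of Theorem \ref{thm_Suzuki2006AGraph_1}. After removing $k$ trees the residual graph $G_k$ is still properly colored with $2n-1$ colors, but now each color class is a matching of size $n-k$, and $G_k$ has $(n-k)(2n-1)$ edges on $2n$ vertices. To obtain the next tree it suffices to check $\omega(G_k - E_R(G_k)) \le |R|+1$ for every $R \subseteq \C$. Since the coloring is proper, deleting the $|R|$ color classes indexed by $R$ removes at most one edge of each color at each vertex, so the minimum degree drops by at most $|R|$; combined with the fact that the matchings are spread across all $2n$ vertices, one expects $G_k - E_R(G_k)$ to stay connected, or have few components, except possibly for an intermediate range of $|R|$, which is exactly where the matching structure of the color classes would have to be used carefully to forbid many small components. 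The matroid formulation is convenient here: a rainbow spanning tree of $G_k$ is precisely a common base of the cycle matroid of $G_k$ and the partition matroid whose blocks are the color classes, so matroid-intersection feasibility could replace the component count.

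The hard part, and the reason this is a long-standing open problem rather than a quick induction, is that local solvability at every stage does not imply global decomposability: a careless choice of the $(k{+}1)$-st tree can leave a residual graph $G_{k+1}$ in which the remaining edges of some color are trapped on too few vertices, violating the criterion even though $G_k$ satisfied it. To make the greedy step safe I would try to choose each tree so as to preserve a quantitative well-spreadness of the residual graph, for instance an expander-mixing or minimum-degree invariant on each $G_k$ that is inherited by $G_{k+1}$, or else abandon greed entirely and build all $n$ trees simultaneously via a global matroid-union/Latin-completion argument on the labeling $t$ above, reserving a few trees' worth of edges as an absorber to repair the final steps. Proving that such a choice always exists, and that the invariant truly propagates for every $n \ge 3$ with the smallest cases checked separately, is the crux on which a complete proof stands or falls.
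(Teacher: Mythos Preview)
The statement you were asked to prove is not proved in the paper at all: it is stated there as Conjecture~\ref{conj_Brualdi1996MulticoloredGraphs_1}, attributed to Brualdi and Hollingsworth, and is treated throughout as an open problem. The paper only surveys partial progress (two trees, three trees, $\lfloor n/(1000\log n)\rfloor$ trees, $\epsilon n$ trees) and then proposes a broader Conjecture~\ref{conj_180910_1} generalizing it. There is therefore no ``paper's own proof'' against which to compare your proposal.

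Your write-up is accordingly not a proof but a research outline, and you yourself flag this: you correctly observe that the hypotheses force a $1$-factorization and that each rainbow tree must be a transversal of the color classes, and you sketch both a greedy peel-off strategy (via Theorem~\ref{thm_Suzuki2006AGraph_1} or matroid intersection) and a global Latin/absorber strategy. But you also correctly identify the genuine obstruction---that a greedy extraction can strand color classes on too few vertices, so local feasibility does not propagate---and you do not supply the invariant or the absorption argument that would close this gap. In short, your proposal accurately diagnoses why the conjecture is hard and gestures at the techniques that later work on this problem in fact uses, but it does not constitute a proof, and neither does anything in the paper.
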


Brualdi and Hollingsworth
\cite{Brualdi1996MulticoloredGraphs}
proved that
a properly edge-colored complete graph
$K_{2n} (n \ge 3)$
with exactly $2n-1$ colors
has two edge-disjoint heterochromatic spanning trees.
Krussel, Marshall, and Verrall
\cite{Krussel2000SpanningK2n}
proved that the graph
has three edge-disjoint heterochromatic spanning trees.
Kaneko, Kano, and Suzuki
\cite{Kaneko2006TwoGraphs}
proved that
a properly edge-colored complete graph
$K_n (n \ge 5)$
(not necessary with exactly $n-1$ colors)
has two edge-disjoint heterochromatic spanning trees.
Akbari and Alipour
\cite{Akbari2007MulticoloredGraphs}
proved that
an edge-colored complete graph $G$
(not necessary to be proper)
of order $n ( \ge 5)$
has two edge-disjoint heterochromatic spanning trees
if $|E_c(G)| \le n/2$ for any color $c \in \C$.
Carraher, Hartke, and Horn
\cite{Carraher2016Edge-disjointGraphs}
proved that
an edge-colored complete graph $G$
of order $n ( \ge 1000000)$
has at least $\lfloor n/(1000 \log n) \rfloor$
edge-disjoint heterochromatic spanning trees
if $|E_c(G)| \le n/2$ for any color $c \in \C$.
Horn
\cite{Horn2018RainbowOne-factorizations}
proved that
there exist positive constants $\epsilon, n_0$
so that
every properly edge-colored complete graph
$K_{2n} (2n \ge n_0)$
with exactly $2n-1$ colors
has at least $\epsilon n$ edge-disjoint
heterochromatic spanning trees.

Based on these previous results,
I conjecture the following
as a generalization of Conjecture
\ref{conj_Brualdi1996MulticoloredGraphs_1}.

\begin{conjecture}
An edge-colored complete graph $G$
of order $2n$ $(n \ge 3)$
can be partitioned into
$n$ edge-disjoint spanning trees
$T_1, T_2, \ldots, T_n$
such that each has
a color probability distribution
is similar to that of $G$,
that is,
each $T_i$ satisfies that
\begin{equation*}
\left\lfloor
\frac{|E_c(G)|}{|E(G)|}(2n-1)
\right\rfloor
\le
|E_c(T_i)|
\le
\left\lceil
\frac{|E_c(G)|}{|E(G)|}(2n-1)
\right\rceil
\text{~~ for any color } c \in \C.
\end{equation*}
\label{conj_180910_1}
\end{conjecture}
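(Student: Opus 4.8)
\medskip
\noindent\textbf{A proof strategy.}
Conjecture \ref{conj_180910_1} contains the Brualdi--Hollingsworth Conjecture \ref{conj_Brualdi1996MulticoloredGraphs_1} as the special case in which $G$ is properly edge-colored with exactly $2n-1$ colors (then $|E_c(G)|=n$, so $\tfrac{|E_c(G)|}{|E(G)|}(2n-1)=1$ and each $T_i$ is forced to be heterochromatic), so I do not expect a complete proof with current tools; the following is a plan of attack. Write $p_c=\tfrac{|E_c(G)|}{|E(G)|}$, $a_c=\lfloor p_c(2n-1)\rfloor$, and $b_c=\lceil p_c(2n-1)\rceil$, so $0\le b_c-a_c\le 1$. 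The first step is to reduce the conjecture to a pure decomposition statement: \emph{it suffices to find $n$ pairwise edge-disjoint spanning trees of $G$, each $(g,f)$-chromatic with $g(c)=a_c$ and $f(c)=b_c$}. Indeed $|E(K_{2n})|=\binom{2n}{2}=n(2n-1)$, so any $n$ edge-disjoint spanning trees automatically partition $E(G)$; and conversely, summing $a_c\le|E_c(T_i)|\le b_c$ over $i$ only demands $a_c\le p_c(2n-1)\le b_c$, which always holds. One may even fix exact target sizes: choosing $|E_c(T_i)|\in\{a_c,a_c+1\}$ amounts to a $0/1$ matrix $X=(X_{ic})$ with every row sum equal to $\delta:=2n-1-\sum_c a_c$ and column sums $s_c:=|E_c(G)|-na_c$; since $0\le s_c\le n-1$ and $\sum_c s_c=n\delta$, a Gale--Ryser argument produces such an $X$. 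Thus the task is to realize a prescribed balanced color-size profile by a spanning-tree decomposition of the edge-colored $K_{2n}$.

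For the realization I would extract the trees one at a time using Theorem \ref{thm_180815_1}. After removing $T_1,\dots,T_{i-1}$ the graph $G_i:=G-\bigcup_{j<i}E(T_j)$ has exactly $(n-i+1)(2n-1)$ edges; set $g_i(c)=\big\lfloor\tfrac{|E_c(G_i)|}{n-i+1}\big\rfloor$ and $f_i(c)=\big\lceil\tfrac{|E_c(G_i)|}{n-i+1}\big\rceil$, and apply Theorem \ref{thm_180815_1} with $m=1$ to pull out a $(g_i,f_i)$-chromatic spanning tree $T_i$. A short rounding lemma shows that this ``equitably peel one share at a time'' rule keeps each $|E_c(T_i)|$ in $\{a_c,b_c\}$ and leaves $G_{i+1}$ eligible for the same rule, so the color counts take care of themselves and the leftover $2n-1$ edges are forced to form $T_n$. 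The entire burden of step $i$ is the hypothesis of Theorem \ref{thm_180815_1}: for every $R\subseteq\C$,
\begin{equation*}
\omega\big(G_i-E_R(G_i)\big)\ \le\ \min\Big\{\,1+\sum_{c\in R}f_i(c),\ \ 2n-\sum_{c\notin R}g_i(c)\,\Big\}.
\end{equation*}

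Verifying this inequality is the main obstacle, and a naive greedy choice of the $T_j$ will not work: one must select $T_1,\dots,T_{i-1}$ with foresight and carry an inductive invariant on $G_i$ stronger than ``$G_i$ has a $(g_i,f_i)$-chromatic spanning tree''. Morally the invariant should guarantee two things at once: that $G_i$ stays highly edge-connected --- so that $\omega(G_i-E_R(G_i))$ can be large only when $|R|$ is large, in which case $1+\sum_{c\in R}f_i(c)$ is large too (a Nash--Williams / Tutte edge count should handle this side) --- and that no color class of $G_i$ has been thinned onto few vertices, which is precisely what keeps $2n-\sum_{c\notin R}g_i(c)$ from being the binding bound. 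Designing a ``connectivity $+$ color-spread'' invariant that survives one extraction step while remaining compatible with the color targets is where I expect the real difficulty to lie; since any such argument would in particular prove Brualdi--Hollingsworth, a realistic near-term goal is partial progress (e.g.\ producing two or three such trees, handling $G$ close to uniformly colored, or an absorption/nibble scheme for large $n$ that builds most of the $T_i$ by a randomized greedy process keeping every color count concentrated near $p_c(2n-1)$ and repairs an $O(1)$ discrepancy per tree with reserved edges). An alternative to iterative extraction is to start from any spanning-tree decomposition of $K_{2n}$ and repeatedly apply matroid symmetric-exchange swaps between two trees to drive down the potential $\sum_c\big(\max_i|E_c(T_i)|-\min_i|E_c(T_i)|\big)$; the snag there is that one does not control the color of the edge received in a swap, so monotone progress is not clear.
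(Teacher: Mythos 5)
The statement you are addressing is an open conjecture: the paper offers no proof of Conjecture \ref{conj_180910_1}, only an illustrative example (Fig.~\ref{fig_180910_1}) and the equivalent reformulation Conjecture \ref{conj_180916_1}, so there is no proof in the paper to compare yours against, and your text itself correctly declines to claim one. The parts of your proposal that are actually argued are sound but essentially reproduce what the paper already records: the observation that the properly-colored case with exactly $2n-1$ colors forces $\lfloor \cdot \rfloor = \lceil \cdot \rceil = 1$ and hence specializes to Conjecture \ref{conj_Brualdi1996MulticoloredGraphs_1} is the paper's own motivation (``as a generalization of Conjecture \ref{conj_Brualdi1996MulticoloredGraphs_1}''), and your reduction to finding $n$ edge-disjoint $(g,f)$-chromatic spanning trees with $g(c)=\lfloor |E_c(G)|/n\rfloor$, $f(c)=\lceil |E_c(G)|/n\rceil$ is exactly the paper's Conjecture \ref{conj_180916_1}, obtained there by the same argument as Remark \ref{rem_180922_1}. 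Your equitable-peeling bookkeeping (that extracting $\lfloor \mu_i\rfloor$ or $\lceil\mu_i\rceil$ of each color keeps the remaining per-tree average of that color in the same unit interval, so later targets never leave $\{a_c,b_c\}$) is a correct and mildly useful observation.

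The genuine gap is the one you yourself flag, and it is the entire content of the conjecture: nothing guarantees the hypothesis of Theorem \ref{thm_180815_1} for the intermediate graphs $G_i$. Theorem \ref{thm_180826_1} (equivalently Theorem \ref{thm_180913_1}) only handles $G_1=G$, because for $i\ge 2$ the graph $G_i$ is no longer complete and the density condition $|E(G_i)|>\binom{2n-1}{2}$ fails immediately, so the inequality $\omega(G_i-E_R(G_i))\le\min\{1+\sum_{c\in R}f_i(c),\,2n-\sum_{c\notin R}g_i(c)\}$ must be maintained by a choice of $T_1,\dots,T_{i-1}$ made with foresight; no such invariant is exhibited, and as you note any invariant that works would in particular prove the Brualdi--Hollingsworth conjecture, where the best known results (cited in the paper: two trees, three trees, $\lfloor n/(1000\log n)\rfloor$ trees, $\epsilon n$ trees) fall far short of a full decomposition. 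A secondary, fixable omission: the last $2n-1$ leftover edges are not ``forced to form $T_n$'' --- they form a spanning tree only if they are connected and acyclic, so the connectivity condition must also be enforced at step $n$. In short, the proposal is a reasonable research plan, consistent with the paper's framing, but it does not prove the statement and does not go beyond the reductions the paper already contains.
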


Since $|E(G)|=2n(2n-1)/2$
for the complete graph $G$ of order $2n$,
we have
\begin{equation*}
\frac{|E_c(G)|}{|E(G)|}(2n-1)
= \frac{|E_c(G)|}{n}.
\end{equation*}
Thus, this conjecture implies that
$E_c(G)$ can be partitioned into
$n$ almost equal parts.
Fig. \ref{fig_180910_1}
shows an edge-colored complete graph $G$ of order $6$
and its partition into three edge-disjoint
spanning trees $T_1$, $T_2$, and $T_3$.
In this example,
\begin{gather*}
|V(G)|=2n=6, |E(G)|=15,\\
|E_1(G)|=7, |E_2(G)|=4,
|E_3(G)|=2, |E_4(G)|=2,\\
|E_1(T_1)|=3, |E_2(T_1)|=1,
|E_3(T_1)|=1, |E_4(T_1)|=0,\\
|E_1(T_2)|=2, |E_2(T_2)|=2,
|E_3(T_2)|=0, |E_4(T_2)|=1,\\
|E_1(T_3)|=2, |E_2(T_3)|=1,
|E_3(T_3)|=1, |E_4(T_3)|=1.
\end{gather*}
Thus, $E_c(G)$ is partitioned
into three almost equal parts for each color $c$,
and each $T_i$ $(1 \le i \le 3)$ satisfies that
\begin{equation*}
\left\lfloor
\frac{|E_c(G)|}{|E(G)|}(2n-1)
\right\rfloor
\le
|E_c(T_i)|
\le
\left\lceil
\frac{|E_c(G)|}{|E(G)|}(2n-1)
\right\rceil
\text{~~ for any color } c \in \C.
\end{equation*}
Hence, each $T_i$
has a color probability distribution
similar to that of $G$.

\fig{1.0\textwidth}{!}
{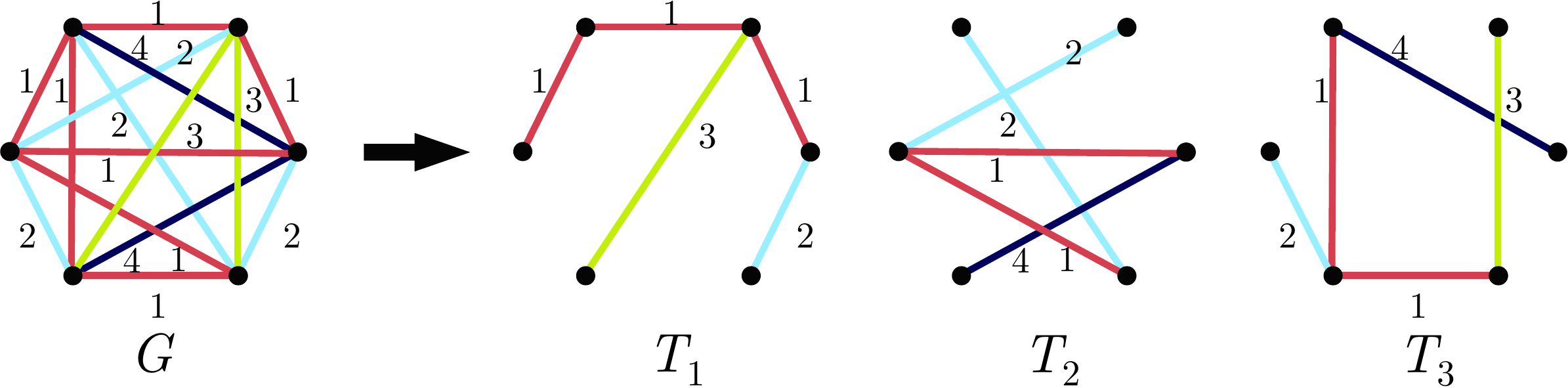}
{An example of Conjecture \ref{conj_180910_1}.}
{fig_180910_1}

By the same argument in the proof of
Remark \ref{rem_180922_1},
we can show that
Conjecture \ref{conj_180910_1} is equivalent
to the following proposition.

\begin{conjecture}
Let $G$ be an edge-colored complete graph
of order $2n$ $(n \ge 3)$.
Let $g$ and $f$ be mappings from $\C$
to $\mathbb{Z}_{\ge 0}$.
If for any color $c \in \C$,
\begin{equation*}
g(c) \le \frac{|E_c(G)|}{n} \le f(c),
\end{equation*}
then $G$ can be partitioned into
$n$ edge-disjoint $(g,f)$-chromatic spanning trees.
\label{conj_180916_1}
\end{conjecture}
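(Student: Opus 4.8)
Since Conjecture \ref{conj_180916_1} is equivalent to Conjecture \ref{conj_180910_1}, it suffices to treat the tightest case $g(c)=\lfloor |E_c(G)|/n\rfloor$ and $f(c)=\lceil |E_c(G)|/n\rceil$; I would fix these values and write $|E_c(G)|=q_cn+r_c$ with $0\le r_c<n$. The goal then becomes to partition $E(G)$ into spanning trees $T_1,\dots,T_n$ so that, for every colour $c$, exactly $r_c$ of the trees contain $q_c+1$ edges of colour $c$ and the remaining $n-r_c$ contain $q_c$. Note first that $K_{2n}$ has $\binom{2n}{2}=n(2n-1)$ edges and, by the Nash--Williams/Tutte tree-packing theorem, exactly $n$ pairwise edge-disjoint spanning trees, which therefore must partition $E(G)$; so the conjecture asks for a colour-balanced version of an essentially forced decomposition, and the problem is genuinely tight.

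My first line of attack would be to build the trees one at a time using Theorem \ref{thm_180815_1}. Having chosen $T_1,\dots,T_k$, let $G_k=(V(G),\,E(G)\setminus\bigcup_{i\le k}E(T_i))$ and let $s_c\in\{0,\dots,r_c\}$ count how many of $T_1,\dots,T_k$ contain $q_c+1$ edges of colour $c$. Set $f_k(c)=q_c+1$ if $s_c<r_c$ and $f_k(c)=q_c$ otherwise, and $g_k(c)=q_c+1$ if $r_c-s_c=n-k$ and $g_k(c)=q_c$ otherwise (so $g_k(c)\le f_k(c)\le q_c+1$). A short computation using $\sum_c|E_c(G_k)|=(n-k)(2n-1)$ shows that $\sum_c g_k(c)\le 2n-1\le\sum_c f_k(c)$ and, more importantly, that \emph{any} $(g_k,f_k)$-chromatic spanning tree $T_{k+1}$ of $G_k$ automatically contains $q_c+1$ edges of colour $c$ for exactly the right number of colours, so the invariant $0\le s_c\le r_c$ propagates. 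Hence everything reduces to verifying, at each step, the structural condition of Theorem \ref{thm_180815_1} for $G_k$, namely $\omega(G_k-E_R(G_k))\le\min\{1+\sum_{c\in R}f_k(c),\;2n-\sum_{c\notin R}g_k(c)\}$ for every $R\subseteq\C$. To control the left-hand side one wants $G_k$ to remain well connected; since each vertex of $K_{2n}$ has degree $2n-1$ and we only delete $k$ spanning trees, $G_k$ has minimum degree at least $2n-1-\Delta(T_1)-\cdots-\Delta(T_k)$, so if the $T_i$ are chosen with bounded maximum degree then $G_k$ stays dense and $\omega(G_k-E_R(G_k))$ can be bounded by an elementary degree count. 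The snag is that Theorem \ref{thm_180815_1} produces a spanning tree with no control over its maximum degree, so this route seems to need a degree-bounded refinement of Theorem \ref{thm_180815_1}, or a cleverer selection of $T_{k+1}$ among the many available.

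In parallel I would try to prove a packing analogue of Theorem \ref{thm_180815_1}: a Nash--Williams/Tutte-type necessary and sufficient criterion for an edge-colored graph to be partitionable into $n$ edge-disjoint $(g,f)$-chromatic spanning trees, obtained by matroid union applied to $n$ copies of the graphic matroid intersected with the partition matroid encoding the upper bounds $f$, with the lower bounds $g$ and the ``cover every edge'' requirement handled by an additional deficiency argument. If such a criterion can be established and then checked on $K_{2n}$ (where the uncoloured tree-packing condition already holds with equality), the conjecture would follow. The obstacle here is that matroid union has a clean min--max only for the ``$\le$'' side, so the lower bounds break the symmetry, and this route might yield only the $f$-chromatic (upper-bound) half cleanly.

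The step I expect to be the real obstacle, common to both routes, is the coupling forced by the final tree: once $T_1,\dots,T_{n-1}$ are fixed, $G_{n-1}$ has exactly $2n-1$ edges and $T_n$ is determined, so its colour profile must already lie in the prescribed window, with no slack left to correct it. Thus any sequential construction must maintain an invariant strong enough to guarantee feasibility all the way to $k=n-1$, and verifying the structural condition above for the \emph{sparse} graphs $G_k$ with $k$ close to $n$ is exactly where I expect the greedy order to break and a genuinely global argument to be required. If neither combinatorial route closes the gap, the fall-back is the probabilistic approach of Carraher, Hartke and Horn \cite{Carraher2016Edge-disjointGraphs} and of Horn \cite{Horn2018RainbowOne-factorizations}: a semi-random construction of the $n$ trees followed by an absorption step that repairs the last few colour discrepancies, which would presumably require $n$ large and leave the small cases (as already happens for $n=3$ in the Brualdi--Hollingsworth setting \cite{Brualdi1996MulticoloredGraphs}) to direct verification.
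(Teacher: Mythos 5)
First, note that the statement you are addressing is not a theorem of the paper: it is Conjecture \ref{conj_180916_1}, which the paper leaves open and only relates to Conjecture \ref{conj_180910_1} by the equivalence argument of Remark \ref{rem_180922_1} (your opening reduction to the tightest case $g(c)=\lfloor |E_c(G)|/n\rfloor$, $f(c)=\lceil |E_c(G)|/n\rceil$ is exactly that equivalence, and it is correct). So there is no proof in the paper to compare against, and, as you yourself make clear, your text is a research programme rather than a proof: neither of your two routes is carried through.

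The genuine gap is the feasibility step that you flag but do not close. In the sequential route, the bookkeeping with $g_k,f_k$ and the invariant $0\le s_c\le r_c$ is fine, but everything hinges on verifying the hypothesis of Theorem \ref{thm_180815_1} for every residual graph $G_k$, i.e.\ $\omega(G_k-E_R(G_k))\le\min\{1+\sum_{c\in R}f_k(c),\,2n-\sum_{c\notin R}g_k(c)\}$ for all $R\subseteq\C$; this is not a routine density estimate, because for $k$ close to $n$ the graph $G_k$ is sparse (only $(n-k)(2n-1)$ edges), Theorem \ref{thm_180815_1} gives no control on the maximum degree or on which tree is chosen, and for $k=n-1$ the last tree is completely forced, so a greedy invariant of the kind you maintain cannot by itself guarantee success --- indeed this difficulty is essentially why even the special case, Conjecture \ref{conj_Brualdi1996MulticoloredGraphs_1} of Brualdi and Hollingsworth, remains open, with only $2$, $3$, or $\epsilon n$ disjoint heterochromatic trees known \cite{Brualdi1996MulticoloredGraphs}\cite{Krussel2000SpanningK2n}\cite{Kaneko2006TwoGraphs}\cite{Akbari2007MulticoloredGraphs}\cite{Carraher2016Edge-disjointGraphs}\cite{Horn2018RainbowOne-factorizations}. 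The matroid-union route has the same hole, which you also acknowledge: the lower bounds $g$ and the requirement that the trees exactly partition $E(G)$ are not captured by the clean min--max for unions of graphic matroids truncated by $f$, and no substitute criterion is supplied. So the proposal contains correct preliminary reductions and a sensible map of obstacles, but the central claim --- the existence of the $n$ edge-disjoint $(g,f)$-chromatic spanning trees --- is not established, and as far as the paper is concerned it remains a conjecture.
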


\section{Proofs}
\label{Proofs}
In this section,
we will prove
Theorem \ref{thm_180815_1},
Theorem \ref{thm_180826_1}, and
Theorem \ref{thm_180830_1}.
In order to prove Theorem \ref{thm_180815_1},
we will use
Lemma \ref{lem_180923_1} and \ref{lem_180923_2},
which will be proved in Section
\ref{proof31} and \ref{proof32}, respectively.
In order to prove Theorem \ref{thm_180826_1},
we will use almost trivial Lemma \ref{lem_180923_3},
which was proved by Suzuki
\cite{Suzuki2013AForests}.

\begin{lemma}
Let $G$ be an edge-colored graph of order $n$.
Let $g$ be a mapping
from $\C$ to $\mathbb{Z}_{\ge 0}$.
$G$ has
a $(g,g)$-chromatic forest
if and only if
\begin{equation*}
\omega(G-E_R(G)) \le
n-\sum_{c \in \C \setminus R}g(c)
\text{~~~ for any } R \subseteq \C.
\end{equation*}
\label{lem_180923_1}
\end{lemma}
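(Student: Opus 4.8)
The plan is to derive Lemma~\ref{lem_180923_1} directly from Theorem~\ref{thm_Suzuki2013AForests_1} by choosing the parameters $f$ and $m$ appropriately, after first disposing of a degenerate case.

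First I would handle the case $\sum_{c \in \C} g(c) \ge n$. A forest on $n$ vertices has at most $n-1$ edges, so a $(g,g)$-chromatic forest — which must contain exactly $\sum_{c \in \C} g(c)$ edges — cannot exist. On the other hand, taking $R = \emptyset$ gives $n - \sum_{c \in \C} g(c) \le 0 < 1 \le \omega(G) = \omega(G - E_{\emptyset}(G))$, so the displayed inequality already fails for $R = \emptyset$. Hence both sides of the claimed equivalence are false, and the lemma holds in this case.

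So assume $\sum_{c \in \C} g(c) \le n-1$ and set $m := n - \sum_{c \in \C} g(c)$, which is a positive integer with $n \ge m$. The key observation is that, for this particular $m$, a $g$-chromatic spanning forest of $G$ with exactly $m$ components is the same object as a $(g,g)$-chromatic forest: such a forest $F$ has $n - m = \sum_{c \in \C} g(c)$ edges and satisfies $|E_c(F)| \le g(c)$ for every color $c$, so summing over all colors forces $|E_c(F)| = g(c)$ for every $c$; conversely a $(g,g)$-chromatic forest, viewed as a spanning subgraph of $G$, is acyclic with $\sum_{c \in \C} g(c)$ edges, hence has exactly $m$ components, and is trivially $g$-chromatic. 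Applying Theorem~\ref{thm_Suzuki2013AForests_1} with $f := g$ and this value of $m$, and simplifying $m + \sum_{c \in R} g(c) = n - \sum_{c \in \C \setminus R} g(c)$, yields exactly the stated condition.

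The only points requiring care are the bookkeeping in the degenerate case and the verification that $m = n - \sum_{c \in \C} g(c)$ meets the hypotheses of Theorem~\ref{thm_Suzuki2013AForests_1} (positivity and $n \ge m$); there is no substantial obstacle, since the equivalence between $(g,g)$-chromatic forests and $g$-chromatic spanning forests with $m$ components is forced by the counting argument above. (Alternatively, one could prove the lemma from scratch via matroid intersection of the cycle matroid of $G$ with the partition matroid induced by the color capacities $g(c)$, but the reduction to Theorem~\ref{thm_Suzuki2013AForests_1} is shorter.)
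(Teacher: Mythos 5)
Your proposal is correct and follows essentially the paper's route: the sufficiency half is the paper's argument verbatim in spirit (set $m = n - \sum_{c \in \C} g(c)$, apply Theorem~\ref{thm_Suzuki2013AForests_1} with $f = g$, and force $|E_c(F)| = g(c)$ from the edge count $|E(F)| = n - m = \sum_{c \in \C} g(c)$). The only divergence is in necessity, where the paper argues directly --- restricting a $(g,g)$-chromatic forest to the colors outside $R$ yields a spanning forest of $G - E_R(G)$ with $\sum_{c \in \C \setminus R} g(c)$ edges, hence $\omega(G - E_R(G)) \le n - \sum_{c \in \C \setminus R} g(c)$ --- while you instead invoke the ``only if'' half of Theorem~\ref{thm_Suzuki2013AForests_1} through your identification of $(g,g)$-chromatic forests with $g$-chromatic spanning forests having exactly $m$ components; both are valid, and your explicit treatment of the degenerate case $\sum_{c \in \C} g(c) \ge n$ makes the positivity of $m$ cleaner than in the paper, where it is only implicit in the $R = \emptyset$ instance of the hypothesis.
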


Note that,
this lemma requires
the forest neither
to be a spanning forest nor
to have a fixed number of components.

\begin{lemma}
Let $G$ be an edge-colored graph of order $n$.
Let $g$ and $f$ be mappings from $\C$
to $\mathbb{Z}_{\ge 0}$
such that $g(c) \le f(c)$ for any $c \in \C$.
Let $m$ be a positive integer.
$G$ has
a $(g,f)$-chromatic spanning forest
with exactly $m$ components
if and only if
$G$ has both
an $f$-chromatic spanning forest
of size at least
$\sum_{c \in \C}g(c)$
with exactly $m$ components,
and
a $(g,g)$-chromatic forest.
\label{lem_180923_2}
\end{lemma}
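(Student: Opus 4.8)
The plan is to dispose of necessity in a couple of lines and put all the work into sufficiency, which I would handle by an exchange argument governed by a two–level potential. For necessity, suppose $F$ is a $(g,f)$-chromatic spanning forest of $G$ with exactly $m$ components. Then $F$ is in particular an $f$-chromatic spanning forest with $m$ components, and partitioning its $n-m$ edges by colour gives $n-m=\sum_{c\in\C}|E_c(F)|\ge\sum_{c\in\C}g(c)$, so it has the required size. Choosing, for each colour $c$, a subset $A_c\subseteq E_c(F)$ with $|A_c|=g(c)$ (possible since $|E_c(F)|\ge g(c)$) and letting $F'=(V(G),\bigcup_{c}A_c)$, the graph $F'$ is a subgraph of the forest $F$, hence a forest, and has exactly $g(c)$ edges of colour $c$; thus $F'$ is a $(g,g)$-chromatic forest. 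That settles necessity.

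For sufficiency I would fix a $(g,g)$-chromatic forest $F_2$ (so $F_2$ is acyclic with $|E_c(F_2)|=g(c)$ for all $c$, whence $|E(F_2)|=\sum_c g(c)$), and observe that the hypothesis also supplies an $f$-chromatic spanning forest with $m$ components; since every spanning forest of $G$ with exactly $m$ components has exactly $n-m$ edges, this forces $n-m\ge\sum_c g(c)$ and makes the family $\mathcal F$ of $f$-chromatic spanning forests of $G$ with exactly $m$ components nonempty. On $\mathcal F$ I would use the \emph{deficiency} $d(F)=\sum_{c\in\C}\max\{0,\,g(c)-|E_c(F)|\}$ and, as a tie-breaker, the \emph{agreement} $a(F)=|E(F)\cap E(F_2)|$, and pick $F\in\mathcal F$ minimizing $d$ and then, subject to that, maximizing $a$. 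The goal is to show $d(F)=0$: that immediately yields $g(c)\le|E_c(F)|\le f(c)$ for every colour $c$, so $F$ is the desired $(g,f)$-chromatic spanning forest with $m$ components.

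Assuming $d(F)>0$, pick a colour $a$ with $|E_a(F)|<g(a)$; then $|E_a(F_2)|=g(a)>|E_a(F)|$, so there is an edge $e_0\in E_a(F_2)\setminus E(F)$, which I would add to $F$. If $F+e_0$ is a forest it has $n-m+1>\sum_c g(c)$ edges, so some colour $b\ne a$ is in surplus in $F+e_0$, and deleting any edge of colour $b$ produces a member of $\mathcal F$ of strictly smaller deficiency. If instead $F+e_0$ has its unique cycle $C_0$ (so $e_0\in C_0$ and $C_0\setminus\{e_0\}\subseteq E(F)$), then either $C_0$ contains an edge of a surplus colour, in which case deleting that edge again lowers $d$; or it does not, in which case, since $C_0$ is a cycle and $F_2$ is acyclic, $C_0\not\subseteq E(F_2)$, and as $e_0\in E(F_2)$ there is an edge $e_1\in C_0\setminus\{e_0\}$ with $e_1\notin E(F_2)$. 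Then $F+e_0-e_1$ (add $e_0$, delete $e_1$) is a member of $\mathcal F$ with the same deficiency — the colour of $e_1$, being non-surplus, picks up exactly the deficiency that $a$ sheds — but with $a(\cdot)$ one larger. Every alternative contradicts the choice of $F$, so $d(F)=0$, and we are done.

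The one genuinely delicate point is this last case: when $e_0$ is forced to complete a cycle all of whose other edges lie in tight-or-deficient colours, no single swap can decrease the deficiency, so the tie-breaker must carry the argument, and this rests on the easy but crucial fact that a cycle of $G$ cannot lie entirely inside the acyclic graph $F_2$. The remaining checks are routine and I would keep them to a few lines: that each swap keeps the object in $\mathcal F$ (it remains a spanning forest with $m$ components because we always add one edge and delete one edge of the resulting fundamental cycle, and it remains $f$-chromatic because $|E_a(F)|<g(a)\le f(a)$ while the deleted edge only lowers a colour count), that the surplus colour $b$ is indeed distinct from $a$, and that the deficiency and agreement change by exactly the stated amounts.
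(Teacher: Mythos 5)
Your proposal is correct and is essentially the paper's argument: both fix a $(g,g)$-chromatic forest and run an add-an-edge-of-a-deficient-colour / delete-an-edge exchange on an extremally chosen $f$-chromatic spanning forest with $m$ components, using acyclicity of the fixed forest to find the edge to delete. The only difference is cosmetic: you minimize a deficiency function and break ties by agreement with $F_2$, whereas the paper gets by with the single criterion of maximizing the number of shared edges with $F_g$, deriving the contradiction from that maximality in every case.
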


Note that,
the $f$-chromatic spanning forest
and the $(g,g)$-chromatic forest
may be different in Lemma \ref{lem_180923_2}.

\begin{lemma}
\begin{equation*}
|E(G)| \le \binom{|V(G)|-\omega(G)+1}{2}
\text{~~ for any graph } G.
\end{equation*}
\label{lem_180923_3}
\end{lemma}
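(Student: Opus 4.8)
The plan is to reduce the bound to a numerical inequality about the orders of the components of $G$, and then prove that inequality by pushing all the ``mass'' into a single component. Write $\omega=\omega(G)$, let $C_1,\dots,C_\omega$ be the components of $G$, and put $n_i=|V(C_i)|$, so that $\sum_{i=1}^\omega n_i=|V(G)|=:n$ and each $n_i\ge 1$. Since $C_i$ is a simple graph on $n_i$ vertices we have $|E(C_i)|\le\binom{n_i}{2}$, hence
\[
|E(G)|=\sum_{i=1}^\omega|E(C_i)|\le\sum_{i=1}^\omega\binom{n_i}{2}.
\]
So it suffices to show that for all positive integers $n_1,\dots,n_\omega$ with $\sum_i n_i=n$ one has $\sum_{i=1}^\omega\binom{n_i}{2}\le\binom{n-\omega+1}{2}$.

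The only real ingredient is the elementary inequality
\[
\binom{a}{2}+\binom{b}{2}\le\binom{a+b-1}{2}\qquad\text{for all integers }a,b\ge 1,
\]
which follows from the identity $\binom{a+b-1}{2}=\binom{a}{2}+\binom{b-1}{2}+a(b-1)$ together with $a(b-1)\ge b-1=\binom{b}{2}-\binom{b-1}{2}$. Now I would apply this repeatedly with a fixed ``reservoir'' part: for each $j=2,\dots,\omega$ in turn, replace the pair $(n_1,n_j)$ by $(n_1+n_j-1,\,1)$. Each such step keeps the number of parts and their total sum unchanged, and does not decrease $\sum_i\binom{n_i}{2}$, because $\binom{n_1+n_j-1}{2}+\binom{1}{2}\ge\binom{n_1}{2}+\binom{n_j}{2}$ by the displayed inequality. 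After all $\omega-1$ steps every part other than the first equals $1$, and the first has grown to $n_1+\sum_{j\ge 2}(n_j-1)=n-\omega+1$; hence the (now not smaller) value of $\sum_i\binom{n_i}{2}$ equals $\binom{n-\omega+1}{2}+(\omega-1)\binom{1}{2}=\binom{n-\omega+1}{2}$. This proves the numerical inequality and therefore the lemma.

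An equally short alternative is induction on $\omega$: the case $\omega=1$ is just $|E(G)|\le\binom{n}{2}$, and for $\omega\ge 2$ one deletes a single component $C$ of order $k$ (with $1\le k\le n-\omega+1$), applies the inductive hypothesis to $G-V(C)$, and closes with the same two-term inequality $\binom{k}{2}+\binom{(n-k)-(\omega-1)+1}{2}\le\binom{n-\omega+1}{2}$. As the text already remarks, this lemma is ``almost trivial'': there is no genuine obstacle, the entire content being the convexity fact $\binom{a}{2}+\binom{b}{2}\le\binom{a+b-1}{2}$, with everything else being bookkeeping about component orders.
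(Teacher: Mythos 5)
Your proof is correct: the reduction to the numerical inequality $\sum_i\binom{n_i}{2}\le\binom{n-\omega+1}{2}$ over the component orders, the identity $\binom{a+b-1}{2}=\binom{a}{2}+\binom{b-1}{2}+a(b-1)$ giving the two-term bound, and the merging (or inductive) bookkeeping are all sound. Note that the paper does not prove Lemma~\ref{lem_180923_3} at all --- it only cites an earlier paper of Suzuki for it --- and the standard argument there is exactly this convexity observation that edges are maximized when one component is a clique on $|V(G)|-\omega(G)+1$ vertices and the rest are isolated vertices, so your write-up simply supplies the details the present paper omits.
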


\subsection{Proof of Lemma \ref{lem_180923_1}}
\label{proof31}
Let $G$ be an edge-colored graph of order $n$.
Let $g$ be a mapping
from $\C$ to $\mathbb{Z}_{\ge 0}$.

First,
we prove the necessity.
Suppose that
$G$ has a $(g,g)$-chromatic forest $F$.
By Definition \ref{def_180815_1},
$|E_c(F)|=g(c)$ for any color $c$.
For any $R \subseteq \C$,
the graph $(V(G),E_{\C \setminus R}(F))$
is a spanning forest of $G-E_R(G)$.
Thus,
\begin{align*}
\omega(G-E_R(G))
&\le \omega((V(G),E_{\C \setminus R}(F)))\\
&=|V(G)|-|E_{\C \setminus R}(F)|\\
&= |V(G)|-\sum_{c \in \C \setminus R}|E_c(F)|\\
&= n-\sum_{c \in \C \setminus R}g(c).
\end{align*}

Next,
we prove the sufficiency.
Suppose that
\begin{equation*}
\omega(G-E_R(G)) \le
n-\sum_{c \in \C \setminus R}g(c)
\text{~~~ for any } R \subseteq \C.
\end{equation*}
Set $m=n-\sum_{c \in \C}g(c)$.
Then,
\begin{equation*}
n-m=\sum_{c \in \C}g(c)
= \sum_{c \in R}g(c)+\sum_{c \in \C \setminus R}g(c)
\text{~~~ for any } R \subseteq \C,
\end{equation*}
that is,
\begin{equation*}
n-\sum_{c \in \C \setminus R}g(c)
= m+\sum_{c \in R}g(c)
\text{~~~ for any } R \subseteq \C.
\end{equation*}
Thus, we have
\begin{equation*}
\omega(G-E_R(G)) \le
m+\sum_{c \in R}g(c)
\text{~~~ for any } R \subseteq \C.
\end{equation*}
Hence, by Theorem \ref{thm_Suzuki2013AForests_1},
$G$ has a $g$-chromatic spanning forest $F$
with exactly $m$ components.
By Definition \ref{def_Suzuki2013AForests},
$|E_c(F)| \le g(c)$ for any color $c \in \C$.
On the other hand,
we have
\begin{equation*}
\sum_{c \in \C}|E_c(F)|
= |E(F)|
= n-m
= n-(n-\sum_{c \in \C}g(c))
= \sum_{c \in \C}g(c).
\end{equation*}
Thus, $|E_c(F)| = g(c)$ for any color $c \in \C$.
Therefore,
by Definition \ref{def_180815_1},
$F$ is a $(g,g)$-chromatic forest of $G$.

\subsection{Proof of Lemma \ref{lem_180923_2}}
\label{proof32}
Let $G$ be an edge-colored graph of order $n$.
Let $g$ and $f$ be mappings from $\C$
to $\mathbb{Z}_{\ge 0}$
such that $g(c) \le f(c)$ for any $c \in \C$.
Let $m$ be a positive integer.

First,
we prove the necessity.
Suppose that
$G$ has
a $(g,f)$-chromatic spanning forest $F$
with exactly $m$ components.
By Definition \ref{def_180815_1},
$g(c) \le |E_c(F)|$ for any color $c \in \C$.
Thus,
$\sum_{c \in \C}g(c)
\le \sum_{c \in \C}|E_c(F)|
= |E(F)|$.
Hence,
$F$ is an $f$-chromatic spanning forest
of size at least
$\sum_{c \in \C}g(c)$
with exactly $m$ components of $G$.
Since $F$ is a $(g,f)$-chromatic forest,
$F$ contains some $(g,g)$-chromatic forest,
which is also a $(g,g)$-chromatic forest in $G$.

Next,
we prove the sufficiency.
Suppose that
$G$ has both
an $f$-chromatic spanning forest
of size at least
$\sum_{c \in \C}g(c)$
with exactly $m$ components,
and
a $(g,g)$-chromatic forest $F_g$.
Let $F_f$ be an $f$-chromatic spanning forest
of size at least
$\sum_{c \in \C}g(c)$
with exactly $m$ components of $G$
such that
it has the maximum number of edges of $F_g$.

We will prove that
$F_f$ is the desired $(g,f)$-chromatic spanning forest
with exactly $m$ components of $G$
by contradiction.

Suppose that
$F_f$ is not a $(g,f)$-chromatic spanning forest
with exactly $m$ components of $G$.
Then, since $F_f$ is $f$-chromatic
but not $(g,f)$-chromatic,
we may assume that for some color, say color $1$,
$|E_1(F_f)| \le g(1)-1$.

Since $F_g$ is $(g,g)$-chromatic,
$|E_1(F_g)|=g(1)$.
Thus, $|E_1(F_f)| < |E_1(F_g)|$.
Hence, $E_1(F_g) \setminus E_1(F_f) \ne \emptyset$.
Let $e$ be an edge in $E_1(F_g) \setminus E_1(F_f)$.
Adding the edge $e$ to $F_f$,
we consider the resulting graph
$(V(F_f),E(F_f) \cup \{ e \})$
denoted by $F_f^+$.
Since $F_f$ is $f$-chromatic and $e \notin E_1(F_f)$,
we have
\begin{equation*}
|E_c(F_f^+)|=
\begin{cases}
|E_c(F_f)|+1 \le g(c) \le f(c)
& \text{if $c=1$},\\
|E_c(F_f)| \le f(c)
& \text{if $c \ne 1$}.
\end{cases}
\end{equation*}
Thus, $F_f^+$ is also an $f$-chromatic
spanning subgraph of $G$.

If the edge $e$ connects two distinct components
of $F_f$ in $F_f^+$,
then $F_f^+$ is an $f$-chromatic spanning forest
with exactly $m-1$ components of $G$.
Since $F_g$ is $(g,g)$-chromatic,
$|E(F_g)| = \sum_{c \in \C}g(c)$.
Since $|E(F_f)| \ge \sum_{c \in \C}g(c)$,
we have
\begin{equation*}
|E(F_f^+)|=|E(F_f)|+1 \ge \sum_{c \in \C}g(c) +1
= |E(F_g)|+1 > |E(F_g)|.
\end{equation*}
Thus,
$E(F_f^+) \setminus E(F_g) \ne \emptyset$.
Let $e'$ be an edge in $E(F_f^+) \setminus E(F_g)$.
Then, we have
\begin{gather*}
\omega(F_f^+ - e')=\omega(F_f^+)+1=m,\\
|E(F_f^+ - e')|=|E(F_f^+)|-1
=|E(F_f)| \ge \sum_{c \in \C}g(c),
\end{gather*}
where $F_f^+ - e'$ denotes the graph
$(V(F_f^+),E(F_f^+) \setminus \{ e' \})$.
Hence,
since $F_f^+$ is an $f$-chromatic spanning forest
of $G$,
$F_f^+ - e'$ is an $f$-chromatic spanning forest
of size at least
$\sum_{c \in \C}g(c)$
with exactly $m$ components of $G$.
Recall that
$e \in E(F_g)$ and $e' \notin E(F_g)$.
Then,
$F_f^+ - e'$, namely,
$(V(F_f),(E(F_f) \cup \{ e \}) \setminus \{ e' \})$
has more edges of $F_g$ than $F_f$,
which is a contradiction
to the maximality of $F_f$.

Therefore, we may assume that
the both endpoints of $e$
are contained in one component of $F_f$.
Then,
$\omega(F_f^+)=\omega(F_f)=m$ and
$F_f^+$ has exactly one cycle $C$,
which contains $e$.
Since $F_g$ has no cycles,
$C$ has some edge $e' \notin E(F_g)$.
Then, $F_f^+ - e'$ is a forest and
\begin{gather*}
\omega(F_f^+ - e')=\omega(F_f^+)=m,\\
|E(F_f^+ - e')|=|E(F_f^+)|-1
=|E(F_f)| \ge \sum_{c \in \C}g(c).
\end{gather*}
Thus,
since $F_f^+$ is an $f$-chromatic spanning subgraph
of $G$,
$F_f^+ - e'$ is an $f$-chromatic spanning forest
of size at least
$\sum_{c \in \C}g(c)$
with exactly $m$ components of $G$.
Recall that
$e \in E(F_g)$ and $e' \notin E(F_g)$.
Then,
$F_f^+ - e'$, namely,
$(V(F_f),(E(F_f) \cup \{ e \}) \setminus \{ e' \})$
has more edges of $F_g$ than $F_f$,
which is a contradiction
to the maximality of $F_f$.

Consequently,
$F_f$ is the desired $(g,f)$-chromatic spanning forest
with exactly $m$ components of $G$.

\subsection{Proof of Theorem \ref{thm_180815_1}}
\label{proof22}
Let $G$ be an edge-colored graph of order $n$.
Let $g$ and $f$ be mappings from $\C$
to $\mathbb{Z}_{\ge 0}$
such that $g(c) \le f(c)$ for any $c \in \C$.
Let $m$ be a positive integer
such that $n \ge m + \sum_{c \in \C}g(c)$.

First,
we prove the necessity.
Suppose that
$G$ has
a $(g,f)$-chromatic spanning forest $F$
with exactly $m$ components.
Since $F$ is a $(g,f)$-chromatic forest,
$F$ contains some $(g,g)$-chromatic forest.
Thus, by Lemma \ref{lem_180923_1}, we have
\begin{equation*}
\omega(G-E_R(G)) \le
n-\sum_{c \in \C \setminus R}g(c)
\text{~~~ for any } R \subseteq \C.
\end{equation*}
On the other hand,
since $F$ is a $(g,f)$-chromatic spanning forest
with exactly $m$ components of $G$,
$F$ is an $f$-chromatic spanning forest
with exactly $m$ components of $G$.
Thus, by Theorem \ref{thm_Suzuki2013AForests_1},
we have
\begin{equation*}
\omega(G-E_R(G)) \le m+\sum_{c \in R}f(c)
\text{~~~~~ for any } R \subseteq \C.
\end{equation*}
Therefore,
\begin{equation*}
\omega(G-E_R(G)) \le
\min
\{~
m+\sum_{c \in R}f(c),~~
n-\sum_{c \in \C \setminus R}g(c)
~\}
\text{~~~ for any } R \subseteq \C.
\end{equation*}

Next,
we prove the sufficiency.
Suppose that
\begin{equation}
\omega(G-E_R(G)) \le
\min
\{~
m+\sum_{c \in R}f(c),~~
n-\sum_{c \in \C \setminus R}g(c)
~\}
\text{~~~ for any } R \subseteq \C.
\label{eq_180925_1}
\end{equation}

By (\ref{eq_180925_1}), we have
\begin{equation*}
\omega(G-E_R(G)) \le m+\sum_{c \in R}f(c)
\text{~~~ for any } R \subseteq \C.
\end{equation*}
Thus, by Theorem \ref{thm_Suzuki2013AForests_1},
$G$ has an $f$-chromatic spanning forest $F$
with exactly $m$ components of $G$.
By our assumption
that $n \ge m + \sum_{c \in \C}g(c)$,
we have
\begin{equation*}
|E(F)| = n-m \ge \sum_{c \in \C}g(c).
\end{equation*}
Thus, $F$ is
an $f$-chromatic spanning forest
of size at least
$\sum_{c \in \C}g(c)$
with exactly $m$ components.

On the other hand,
by (\ref{eq_180925_1}), we have
\begin{equation*}
\omega(G-E_R(G)) \le n-\sum_{c \in \C \setminus R}g(c)
\text{~~~ for any } R \subseteq \C.
\end{equation*}
Thus, by Lemma \ref{lem_180923_1},
$G$ has a $(g,g)$-chromatic forest.

Therefore,
by Lemma \ref{lem_180923_2},
$G$ has
a $(g,f)$-chromatic spanning forest
with exactly $m$ components.

\subsection{Proof of Theorem \ref{thm_180826_1}}
\label{proof23}
Let $G$ be an edge-colored graph of order $n$.
Let $g$ and $f$ be mappings from $\C$
to $\mathbb{Z}_{\ge 0}$.
Let $m$ be a positive integer such that $n \ge m$.
Suppose that $|E(G)|>\binom{n-m}{2}$ and
\begin{equation}
g(c) \le \frac{|E_c(G)|}{|E(G)|}(n-m) \le f(c)
\text{~~~~~ for any color } c \in \C.
\label{eq_180926_1}
\end{equation}
Then, since $\sum_{c \in \C}|E_c(G)|=|E(G)|$,
we have
\begin{equation}
\sum_{c \in \C}g(c)
\le
\sum_{c \in \C}\frac{|E_c(G)|}{|E(G)|}(n-m)
= n-m,
\text{ that is, }
n \ge m + \sum_{c \in \C}g(c).
\label{eq_180926_2}
\end{equation}

We will prove that
$G$ has a $(g,f)$-chromatic spanning forest
with exactly $m$ components
by contradiction.

Suppose that
$G$ has no $(g,f)$-chromatic spanning forests
with exactly $m$ components.
By (\ref{eq_180926_2}) and our assumption,
we can apply Theorem \ref{thm_180815_1} to $G$
and we have
\begin{equation*}
\omega(G-E_R(G)) >
\min
\{~
m+\sum_{c \in R}f(c),~~
n-\sum_{c \in \C \setminus R}g(c)
~\}
\text{~~~ for some } R \subseteq \C.
\end{equation*}

That is,
$\omega(G-E_R(G)) \ge m+\sum_{c \in R}f(c) +1$ or
$\omega(G-E_R(G)) \ge
n-\sum_{c \in \C \setminus R}g(c) +1$
for some $R \subseteq \C$.
We denote $G-E_R(G)$ by $G'$.

\begin{claim}
\begin{equation*}
\omega(G') \ge m+1
\text{~ and ~}
\omega(G') \ge n+1- \frac{|E(G')|}{|E(G)|}(n-m)
\end{equation*}
\label{clm_180926_1}
\end{claim}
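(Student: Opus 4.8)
The claim is just a restatement, in edge-count language, of the failure of the minimum in Theorem~\ref{thm_180815_1}, so the plan is to unwind that minimum and feed in the hypothesis~(\ref{eq_180926_1}). Recall from the line preceding the claim that there is a set $R\subseteq\C$ with
\begin{equation*}
\omega(G') \ge m+\sum_{c\in R}f(c)+1
\quad\text{or}\quad
\omega(G') \ge n-\sum_{c\in\C\setminus R}g(c)+1 .
\end{equation*}
Throughout I would use the two bookkeeping identities $|E_R(G)|=\sum_{c\in R}|E_c(G)|$ and $|E(G')|=|E(G)|-|E_R(G)|=\sum_{c\in\C\setminus R}|E_c(G)|$, which hold because the color classes partition $E(G)$.

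For the first assertion $\omega(G')\ge m+1$: in the first case it is immediate because $\sum_{c\in R}f(c)\ge 0$; in the second case it follows from~(\ref{eq_180926_2}), which gives $n\ge m+\sum_{c\in\C}g(c)\ge m+\sum_{c\in\C\setminus R}g(c)$, hence $n-\sum_{c\in\C\setminus R}g(c)\ge m$.

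For the second assertion: in the first case I would lower-bound $\sum_{c\in R}f(c)$ using~(\ref{eq_180926_1}) by
\begin{equation*}
\sum_{c\in R}f(c)\ \ge\ \sum_{c\in R}\frac{|E_c(G)|}{|E(G)|}(n-m)
=\frac{|E_R(G)|}{|E(G)|}(n-m)
=(n-m)-\frac{|E(G')|}{|E(G)|}(n-m),
\end{equation*}
and substitute into $\omega(G')\ge m+\sum_{c\in R}f(c)+1$. In the second case I would upper-bound $\sum_{c\in\C\setminus R}g(c)$ using~(\ref{eq_180926_1}) by $\frac{|E(G')|}{|E(G)|}(n-m)$ and substitute into $\omega(G')\ge n-\sum_{c\in\C\setminus R}g(c)+1$. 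Either way the outcome is exactly $\omega(G')\ge n+1-\frac{|E(G')|}{|E(G)|}(n-m)$.

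The only thing that needs care — and it is not really an obstacle — is matching each hypothesis to the right index set: the upper bound $f$ naturally controls the sum over the deleted colors $R$ (whose edges form $E_R(G)$), while the lower bound $g$ naturally controls the sum over the surviving colors $\C\setminus R$ (whose edges form $E(G')$). Once that correspondence is kept straight, both inequalities drop out of a one-line substitution in each case, and the claim follows.
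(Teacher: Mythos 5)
Your proposal is correct and follows essentially the same route as the paper: split into the two cases coming from the failed minimum, use hypothesis~(\ref{eq_180926_1}) to bound $\sum_{c\in R}f(c)$ from below (resp.\ $\sum_{c\in\C\setminus R}g(c)$ from above) via the identity $|E(G')|=|E(G)|-|E_R(G)|$, and use $f\ge 0$ (resp.\ inequality~(\ref{eq_180926_2})) for the bound $\omega(G')\ge m+1$. No gaps; this matches the paper's argument step for step.
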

\begin{proof}
First, we suppose that
$\omega(G') \ge m+\sum_{c \in R}f(c)+1$
for some $R \subseteq \C$.
Since $f(c) \ge 0$ for any color $c$,
$\omega(G') \ge m+\sum_{c \in R}f(c)+1 \ge m+1$.

By our assumption (\ref{eq_180926_1}),
\begin{align*}
\sum_{c \in R}f(c)
&\ge
\sum_{c \in R}\frac{|E_c(G)|}{|E(G)|}(n-m)
= \frac{n-m}{|E(G)|}\sum_{c \in R}|E_c(G)|
= \frac{n-m}{|E(G)|}|E_R(G)|\\
&= \frac{n-m}{|E(G)|}(|E(G)|-|E(G')|)
= n-m-\frac{|E(G')|}{|E(G)|}(n-m).
\end{align*}
Thus, we have
\begin{align*}
\omega(G')
&\ge m+\sum_{c \in R}f(c)+1\\
&\ge m+ n-m-\frac{|E(G')|}{|E(G)|}(n-m) +1
= n+1- \frac{|E(G')|}{|E(G)|}(n-m).
\end{align*}

Next, we suppose that
$\omega(G') \ge
n-\sum_{c \in \C \setminus R}g(c) +1$
for some $R \subseteq \C$.
By (\ref{eq_180926_2}),
$\sum_{c \in \C}g(c) \le n-m$.
Thus, we have
\begin{align*}
\omega(G')
\ge n-\sum_{c \in \C \setminus R}g(c) +1
\ge n-\sum_{c \in \C}g(c) +1
\ge n-(n-m)+1
= m+1.
\end{align*}

By our assumption (\ref{eq_180926_1}),
\begin{equation*}
\sum_{c \in \C \setminus R}g(c)
\le
\sum_{c \in \C \setminus R}\frac{|E_c(G)|}{|E(G)|}(n-m)
= \frac{|E_{\C \setminus R}(G)|}{|E(G)|}(n-m)
= \frac{|E(G')|}{|E(G)|}(n-m).
\end{equation*}
Thus, we have
\begin{align*}
\omega(G')
&\ge n-\sum_{c \in \C \setminus R}g(c) +1\\
&\ge n-\frac{|E(G')|}{|E(G)|}(n-m) +1
= n+1- \frac{|E(G')|}{|E(G)|}(n-m).
\end{align*}
\end{proof}

By Claim \ref{clm_180926_1},
\begin{equation*}
n-\omega(G')+1 \le \frac{|E(G')|}{|E(G)|}(n-m).
\end{equation*}
Since $n \ge \omega(G')$,
$n-\omega(G')+1 \ge 1$,
that is, $n-\omega(G')+1 \ne 0$.
Thus,
\begin{align*}
|E(G)|
&\le
\frac{n-m}{n-\omega(G')+1}|E(G')|.
\intertext{
Since $|V(G')|=|V(G)|=n$,
by Lemma \ref{lem_180923_3},}
|E(G)|
&\le
\frac{n-m}{n-\omega(G')+1}
\binom{|V(G')|-\omega(G')+1}{2}\\
&\le
\frac{n-m}{n-\omega(G')+1}
\times \frac{(n-\omega(G')+1)(n-\omega(G'))}{2}\\
&=
\frac{(n-m)(n-\omega(G'))}{2}.
\intertext{
By Claim \ref{clm_180926_1},
$\omega(G') \ge m+1$.
Thus,}
|E(G)|
&\le
\frac{(n-m)(n-(m+1))}{2}
= \binom{n-m}{2},
\end{align*}
which contradicts to our assumption
that $|E(G)|>\binom{n-m}{2}$.

Therefore,
$G$ has a $(g,f)$-chromatic spanning forest
with exactly $m$ components.

\subsection{Proof of Theorem \ref{thm_180830_1}}
\label{proof26}
If an edge-colored complete graph $G$ of order $n$
has a spanning tree $T$
with the same color probability distribution
as that of $G$,
that is,
\begin{equation*}
\frac{|E_c(G)|}{|E(G)|} = \frac{|E_c(T)|}{|E(T)|}
\text{~~ for any color } c \in \C.
\end{equation*}
then
\begin{equation*}
|E_c(G)| = \frac{|E_c(T)||E(G)|}{|E(T)|}
= \frac{|E_c(T)|n(n-1)/2}{n-1}
= \frac{|E_c(T)|n}{2}
\text{~~ for any color } c \in \C.
\end{equation*}
Thus, since $|E_c(T)|$ is an integer,
$|E_c(G)|$ is an integral multiple of $n/2$.

Next, let $G$ be
an edge-colored complete graph $G$ of order $n$.
For any color $c \in \C$,
we suppose that
$|E_c(G)| = k_c \times n/2$
for some $k_c \in \mathbb{Z}_{\ge 0}$.
By Theorem \ref{thm_180913_1},
$G$ has a spanning tree $T$
such that
\begin{equation*}
\left\lfloor
\frac{|E_c(G)|}{|E(G)|}(n-1)
\right\rfloor
\le
|E_c(T)|
\le
\left\lceil
\frac{|E_c(G)|}{|E(G)|}(n-1)
\right\rceil
\text{~~ for any color } c \in \C.
\end{equation*}
Since $|E(G)|=n(n-1)/2$ and
$|E_c(G)| = k_c \times n/2$
($k_c \in \mathbb{Z}_{\ge 0}$),
we have
\begin{equation*}
k_c
=
\lfloor k_c \rfloor
=
\left\lfloor
\frac{|E_c(G)|}{|E(G)|}(n-1)
\right\rfloor
\le
|E_c(T)|
\le
\left\lceil
\frac{|E_c(G)|}{|E(G)|}(n-1)
\right\rceil
=
\lceil k_c \rceil
=
k_c.
\end{equation*}
Thus, $|E_c(T)|=k_c$.
Then,
\begin{equation*}
\frac{|E_c(G)|}{|E(G)|}
= \frac{k_c \times n/2}{n(n-1)/2}
= \frac{k_c}{n-1}
= \frac{|E_c(T)|}{|E(T)|}
\text{~~ for any color } c \in \C.
\end{equation*}
Therefore,
the color probability distribution of $T$
is the same as that of $G$.

\bigskip
\noindent
\textbf{Acknowledgments}
This work was supported by
JSPS KAKENHI Grant Number 16K05254.

\bibliography{Mendeley}
\bibliographystyle{tutetuti}

\end{document}